\newtheorem{theorem}{Theorem}
\newtheorem{lemma}[theorem]{Lemma}
\newtheorem{corollary}[theorem]{Corollary}
\newtheorem{proposition}[theorem]{Proposition}
\theoremstyle{definition}
\newtheorem{definition}{Definition}
\newtheorem{assumption}{Assumption}
\newtheorem{example}{Example}
\theoremstyle{remark}
\newtheorem{remark}{Remark}
\def\be{\begin{eqnarray}}
\def\ee{\end{eqnarray}}
\def\ba{\begin{aligned}}
\def\ea{\end{aligned}}
\def\bay{\begin{array}}
\def\eay{\end{array}}
\def\bca{\begin{cases}}
\def\eca{\end{cases}}
\def\p{\partial}
\def\no{\nonumber}
\def\e{\epsilon}
\def\de{\delta}
\def\Om{\Omega}
\def\f{\frac}
\def\th{\theta}
\def\la{\lambda}
\def\lab{\label}
\def\b{\bigg}
\def\var{\varphi}
\def\ka{\kappa}
\def\al{\alpha}
\def\ga{\gamma}
\def\Ga{\Gamma}
\def\ti{\tilde}
\def\ol{\overline}
\def\si{\sigma}
\def\q{\quad}
\def\bt{\begin{theorem}}
\def\et{\end{theorem}}
\def\bc{\begin{corollary}}
\def\ec{\end{corollary}}
\def\bl{\begin{lemma}}
\def\el{\end{lemma}}
\def\bp{\begin{proposition}}
\def\ep{\end{proposition}}
\def\br{\begin{remark}}
\def\er{\end{remark}}
\def\bd{\begin{definition}}
\def\ed{\end{definition}}
\def\bpf{\begin{proof}}
\def\epf{\end{proof}}
\def\bex{\begin{example}}
\def\eex{\end{example}}
\def\bq{\begin{question}}
\def\eq{\end{question}}
\def\bas{\begin{assumption}}
\def\eas{\end{assumption}}
\def\ber{\begin{exercise}}
\def\eer{\end{exercise}}
\def\mb{\mathbb}
\def\mbR{\mb{R}}
\def\mc{\mathcal}
\def\mfd{\mathfrak{d}}
\def\mfe{\mathfrak{e}}
\def\mcD{\mc{D}}
\def\mcF{\mc{F}}
\def\mcG{\mc{G}}
\def\mcK{\mc{K}}
\def\mcS{\mc{S}}
\def \bfe{\mathbf{e}}
\def\bu{\mathbf{u}}
\def\bW{\mathbf{W}}
\def\bx{\mathbf{x}}
\def\bPhi{\boldsymbol{\Phi}}
\def\bPsi{\boldsymbol{\Psi}}
\def\msR{\mathscr{R}}
\begin{document}
\title[Stability of transonic shock]{Structural stability of the transonic shock problem in a divergent three dimensional axisymmetric perturbed nozzle}
\author{Shangkun Weng}
\address{School of Mathematics and Statistics, Wuhan University, Wuhan, Hubei Province, 430072, People's Republic of China.}
 \email{skweng@whu.edu.cn}\author{Chunjing Xie}
 \address{School of Mathematical Sciences, Institute of Natural Sciences, Ministry of Education Key Laboratory
of Scientific and Engineering Computing, Shanghai Jiao Tong University, Shanghai 200240,
 China.}
  \email{cjxie@sjtu.edu.cn}
  \author{Zhouping Xin}
  \address{The Institute of Mathematical Sciences and department of mathematics, The Chinese University of Hong Kong, Shatin, NT, Hong Kong.}
  \email{zpxin@ims.cuhk.edu.hk}

\begin{abstract}
  In this paper, we prove the structural stability of the transonic shocks for three dimensional axisymmetric Euler system with swirl velocity under the perturbations for the incoming supersonic flow, the nozzle boundary, and the exit pressure. Compared with the known results on the stability of transonic shocks, one of the major difficulties for the axisymmetric flows with swirls is that corner singularities near the intersection point of the shock surface and nozzle boundary and the artificial singularity near the axis appear simultaneously.  One of the key points in the analysis for this paper is the introduction of an {\it invertible} Lagrangian transformation which can straighten the streamlines in the whole nozzle and help to represent the solutions of transport equations explicitly.
\end{abstract}

\keywords{Steady Euler system, transonic shock,  structural stability, swirl, Lagrangian transformation.}
\subjclass[2010]{35L65, 35L67, 76N15.}
\date{}
\maketitle

\section{Introduction and main results}
The three-dimensional steady inviscid gas motion is governed by the following compressible Euler system
\be\label{com-euler}
\begin{cases}
\text{div }(\rho {\bf u})=0,\\
\text{div }(\rho {\bf u}\otimes {\bf u}+ P I_n) =0,\\
\text{div }(\rho (\f12|{\bf u}|^2 +\mfe) {\bf u}+ P{\bf u}) =0,
\end{cases}
\ee
where ${\bf u}=(u_1, u_2,u_3)$, $\rho$, $P$, and $\mfe$ stand for the velocity, density, pressure, and internal energy, respectively. Suppose that the gas is polytropic. Then the equation of state and the internal energy are of the form
\begin{equation}\label{eqstate}
P=A \rho^{\ga} e^{\f{S}{c_v}}\quad \text{and}\quad
\mfe=\f{P}{(\ga-1)\rho},
\end{equation}
 respectively, where $\gamma\geq 1$, $A$, and $c_v$ are positive constants, and $S$ is called the specific entropy.
 The system \eqref{com-euler} is a hyperbolic system for supersonic flows ($M_a>1$), a hyperbolic-elliptic  coupled  system for subsonic flows ($M_a<1$), and degenerate at sonic point (i.e. $M_a=1$), respectively, where $M_a=\f{|{\bf u}|}{c(\rho,S)}$ is called the Mach number of the flows with $c(\rho,S)=\sqrt{\partial_\rho P(\rho, S)}$ called the local sound speed.

In this paper, we are interested in the basic transonic shock problem in a De Laval nozzle described by Courant and Friedrichs (\cite[Page 386]{cf48}): given appropriately large receiver pressure $P_e$, if the upstream flow is still supersonic behind the throat of the nozzle, then at a certain place in the diverging part of the nozzle a shock front intervenes and the gas is compressed and slowed down to subsonic speed. The position and the strength of the shock front are automatically adjusted so that the end pressure at the exit becomes $P_e$. The stability of transonic shocks in nozzles is a fundamental problem in gas dynamics that have been studied extensively  in various situations. The early studies for transonic flows, in particular for quasi-one dimensional models,  can be found in \cite{bers58,liu82,egm84}.  The structural stability of  transonic shocks  for multidimensional steady potential flows in  nozzles was studied in \cite{ChenF, xy05, xy08a}. It was showed in \cite{xy05,xy08a} that the stability of transonic shock for potential flows is usually ill-posed under the perturbation of the exit pressure. Later on, it was proved that the transonic shock problem in the flat nozzle with small perturbations is either ill-posed  under general perturbations of the exit pressure or well-posedness if the exit pressure satisfies a special constraint, see \cite{chen05,chen08,cy08,ly08,lxy10b} and the references therein.
There have been many interesting results on transonic shock problems in a nozzle for different models with various exit boundary conditions, for example,  the non-isentropic potential model, the exit boundary condition for the normal velocity, the spherical flows without boundary, etc, see \cite{bf11, ccs06, ccf07, lxy16} and references therein. The well-posedness of the transonic shock problem was first established in a special class of two dimensional divergent nozzle under the perturbations for the exit pressure in \cite{lxy09a}. Later on, the results were generalized to the problem in general two dimensional divergent nozzles, see \cite{lxy09b,lxy13}. In particular, in \cite{lxy13}, the Courant-Friedrich's transonic shock in a two dimensional straight divergent nozzle is shown to be structurally stable under generic perturbations for both the nozzle shape and the exit pressure, and optimal regularity of solutions are also obtained. Such a structural stability also holds for perturbations of incoming supersonic flows \cite{Weng}. The key idea there is to introduce a Lagrangian transformation to straighten the streamlines and reduce the Euler system with the shock to a second order elliptic equation with a nonlocal term  and an unknown parameter together with an ODE for the shock front. In \cite{lxy10a,lxy10b}, the existence and stability of  transonic shock for three dimensional axisymmetric flows without swirl in a conic nozzle was proved to be structurally stable under suitable perturbations of the exit pressure.

In this paper, we study the stability of transonic shocks for 3D axisymmetric flows with swirls under the perturbations of the exit pressure, the nozzle wall, and supersonic incoming flows. First, let us introduce the standard spherical coordinates
\begin{equation*}
\left\{
\begin{aligned}
&x_1 =r\cos\theta,\\
& x_2=r\sin\theta\cos\varphi,\\
&x_3=r\sin\theta\sin\varphi
\end{aligned}
\right.
\quad\text{and}\quad
\left\{
\begin{aligned}
&{\bf e}_r =(\cos\th,\sin\theta \cos\var,\sin\theta \sin\var)^t,\\
&{\bf e}_{\th}=(-\sin\th,\cos\theta \cos\var,\cos\th \sin\var)^t,\\
& {\bf e}_{\var}=(0,-\sin\var,\cos\var)^t.
\end{aligned}
\right.
\end{equation*}
Let ${\bf u}=U_1 {\bf e}_r+ U_2{\bf e}_{\th} + U_3{\bf e}_{\var}$.
The three dimensional axisymmetric Euler system can be written as
\begin{equation}\lab{euler-sph1}
\left\{
\begin{aligned}
&\p_r(r^2 \rho U_1 \sin\theta) +\p_{\theta} (r \rho U_2\sin\theta)=0,\\
&\rho U_1\p_r U_1+\f{1}{r}\rho U_2 \p_{\theta} U_1  + \p_r P- \f{\rho (U_2^2+U_3^2)}{r}=0,\\
&\rho U_1\p_r U_2+\f{1}{r}\rho U_2 \p_{\theta} U_2+\f{1}{r}\p_{\theta} P +\f{\rho U_1 U_2}{r}-\f{\rho U_3^2}{r}\cot\theta=0,\\
&\rho U_1\p_r (r U_3\sin\theta)+\f{1}{r}\rho U_2 \p_{\theta} (r U_3\sin\theta) =0,\\
&\rho U_1\p_r S+\f{1}{r}\rho U_2 \p_{\theta} S =0.
\end{aligned}
\right.
\end{equation}
Suppose that $\theta_0\in (0, \frac{\pi}{2})$, $r_1$, $r_2(>r_1)$ are fixed  positive constants.  Let $\Omega_b= \{(r, \theta):  r\in (r_1, r_2), \theta\in [0, \theta_0)\}$ be a straight divergent nozzle and $\Gamma_b=\partial\Omega_b$ be its boundary.
\begin{center}
\includegraphics[height=5cm, width=10cm]{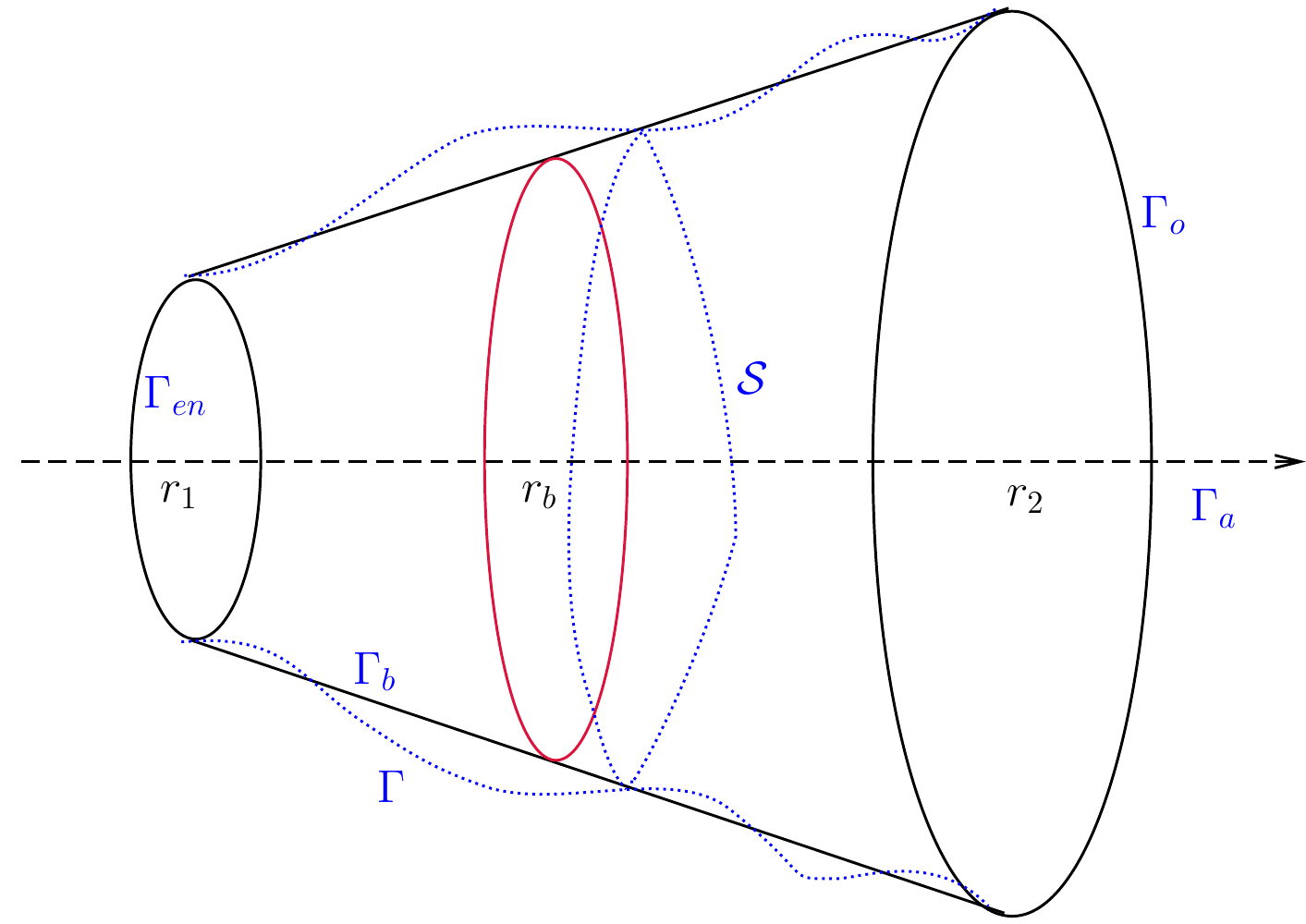}\\
{\small Figure 1.  The straight and perturbed nozzles}
\end{center}

Suppose that the incoming supersonic flow is prescribed at the inlet $r=r_1$, i.e.,
\begin{equation}
{\bu}^-(\bx)=U_b^-(r_1){\bfe}_r,\quad  P_b^-(\bx)=P_b^-(r_1)>0,\quad  S_b^-(\bx)=S_b^-, \quad \text{ at  } r=r_1,
 \end{equation}
where $U_b^-(r_1)>c(\rho_b^-(r_1),S_b^-)>0$ and $S_b^-$ is a constant. There exist two positive constants $P_1$ and $P_2$ which depend only on the incoming supersonic flows and the nozzle, such that if the pressure $P_e\in (P_1, P_2)$ is given at the exit $r=r_2$,  then there exists a unique piecewise smooth spherical symmetric transonic shock solution
\be\lab{background}
\bPsi_b(\bx)= ({\bu}_b,P_b,S_b)(\bx)=\left\{
\begin{aligned}
&{{\bPsi}}_b^-(\bx):=(U_b^{-}(r),0, 0, P_b^{-}(r),S_b^{-}),\,\,\text{in }\Omega_b^-\\
&{{\bPsi}}_b^+(\bx):=(U_b^{+}(r), 0, 0, P_b^{+}(r),S_b^{+}),\,\,\text{in }\Omega_b^+
\end{aligned}
\right.
\ee
to \eqref{com-euler} with a shock front located at $r=r_b\in (r_1,r_2)$, where
\begin{equation}
\Om_{b}^-=\Omega_b\cap \{ r\in (r_1,r_b)\}\quad \text{and}\quad  \Om_{b}^+=\Omega_b\cap \{ r\in (r_b,r_2)\}.
\end{equation}
Across the shock, the Rankine-Hugoniot conditions and the physical entropy condition are satisfied:
\begin{equation}\label{RH_background}
[\rho U_b]\Big|_{r=r_b}=0,\quad [\rho_b U_b^2+P_b]\Big|_{r=r_b}=0,\quad [B]\Big|_{r=r_b}=0, \quad S_b^+>S_b^-,
\end{equation}
where $B=\frac{|\bu|^2}{2}+\mfe+\frac{P}{\rho}$ is called the Bernoulli function and $[g]\Big|_{r=r_b}:=g(r_b+)-g(r_b-)$ denotes the jump of $g$ at $r=r_b$. Later on,  this special solution, $\bPsi_b$, will be called the background solution. Clearly, one can extend the supersonic and subsonic parts of $\bPsi_b$ in a natural way, respectively. With an abuse of notations, we still call the extended subsonic and supersonic solutions $\bPsi_b^+$ and $\bPsi_b^-$, respectively.  One can refer to \cite[Section 147]{cf48} or \cite[Theorem 1.1]{xy08b} for more details of this spherical symmetric transonic shock solution. The main goal of this paper is to establish the structural stability of this spherical symmetric transonic shock solution under axisymmetric perturbations of the incoming supersonic flows, the nozzle walls, and the exit pressure.

The perturbed nozzle is $\Om=\{(r,\theta): r_1<r<r_2, 0\leq \theta \leq \theta_0 + \e f(r)\}$, where $\e$ is a small positive constant and $f\in C^{2,\al}([r_1,r_2])$ satisfies
\be\lab{wall}
f(r_1)=f'(r_1)=0.
\ee
Suppose that the incoming supersonic flow at the inlet $r=r_1$ is given
by \begin{equation}\lab{super1}
\bPsi\Big|_{r=r_1}:=(U_1^-,U_2^-,U_3^-, P^-,S^-)\Big|_{r=r_1}=\bm{\Psi}_{en}^-=\bm{\Psi}_b^-+ \e \bm{\Psi}_p(\theta),
\end{equation}
where
\begin{equation}\lab{super2}
\bm{\Psi}_p(\theta)=(U_{1,p}^-,U_{2,p}^-,U_{3,p}^-, P_p^-,S_p^-)(\theta)\in (C^{2,\al}([0,\theta_0]))^5
\end{equation}
The flow satisfies  the slip condition ${\bf u}\cdot {\bf n}$=0 on the nozzle wall, where ${\bf n}$ is the outer normal of the nozzle wall. In terms of spherical coordinates, the slip boundary condition for the axisymmetric flows can be written as
\be\lab{slip1}
U_2 =\e r f'(r) U_1\q \text{on } \Gamma:=\{(r,\theta): \theta= \theta_0+\e f(r),\q r_1\leq r\leq r_2\}.
\ee
At the exit of the nozzle, the end pressure is prescribed by
\be\lab{pressure}
P(x)= P_e + \e P_0(\theta)\ \text{at}\ \ \Gamma_o:=\{(r_2,\theta): \theta \in (0, \theta_0)\},
\ee
here $P_0\in C^{1,\al}([0, 2\theta_0])$ (in fact, what is needed in this paper is that $P_0$ is a $C^{1, \alpha}$ function in a region slightly larger than $[0, \theta_0]$).

Since the steady Euler system for supersonic flow is hyperbolic, if the incoming data satisfies the following compatibility conditions
\be\lab{super3}\begin{cases}
U_{2,p}^-(0)=U_{3,p}^-(0)=\f{d^2}{d\th^2}U_{2,p}^-(0)=\f{d}{d\theta} P_p^-(0)=\f{d}{d\theta} U_{3,p}^-(0)=\frac{d}{d\theta}S_p^-(0)=0,\\
U_{2,p}^-(\theta_0)=0,\ \f{d}{d\theta} P_p^-(\theta_0)= (U_{3,p}^-(\theta_0))^2\cot\theta_0,
\end{cases}\ee
then the problem for the system \eqref{euler-sph1} together with \eqref{super1} and \eqref{slip1} can be solved by
 the characteristic method and Picard iteration (see \cite{john90}). Furthermore, for small $\e>0$, there exists a unique $C^{2,\al}(\ol{\Om})$ solution $\bm\Psi^-=(U_1^-,U_2^-,U_3^-,P^-,S^-)(r,\theta)$ to \eqref{com-euler}, which does not depend on $\var$ and satisfies the following properties
\be\lab{super4}
\|(U_1^-,U_2^-,U_3^-,P^-,S^-)-({U}_{b}^-,0,0,{P}_b^-, {S}_b^-)\|_{C^{2,\al}(\overline{\Om})}\leq C_0\e,
\ee and
\be\lab{super5}
\begin{aligned}
U_2^-=U_3^- =\f{\p}{\p \th}(U_1^-,U_3^-, P^-,S^-)= \f{\p^2}{\p \th^2}U_2^-=0, \,\, \text{at}\,\, \Gamma_a:=\{(r, 0): r_1< r< r_2\}.
\end{aligned}
\ee

Now we are looking for a piecewise smooth solution $\bm\Psi$ for \eqref{euler-sph1} supplemented with the boundary conditions \eqref{super1}, \eqref{slip1}, and \eqref{pressure}, which jumps only at a shock front at $\mc{S}=\{(r, \theta): r=\xi(\theta), 0\leq \theta \leq \theta_0\}$. More precisely, $\bm\Psi$ has the form
\begin{equation}
\bm\Psi=\left\{
\begin{aligned}
& \bm\Psi^-=(U_1^-, U_2^-, U_3^-, P^-, S^-)(r, \theta),\quad\text{if}\,\, r_1< r<\xi(\theta), \,\, 0\leq \theta <\theta_0,\\
& \bm\Psi^+=(U_1^+, U_2^+, U_3^+, P^+, S^+)(r, \theta),\quad\text{if}\,\, \xi(\theta)< r<r_2, \,\, 0\leq \theta <\theta_0,
\end{aligned}
\right.
\end{equation}
and the following Rankine-Hugoniot conditions on the shock surface $\mcS=\{(r, \theta)| r=\xi(\theta)\}$ are satisfied
\begin{equation}\lab{rh-sph}
\begin{cases}
[\rho U_1] -\f{\xi'(\theta)}{\xi(\theta)} [\rho U_2]=0,\\
[\rho U_1^2+ P]-\f{\xi'(\theta)}{\xi(\theta)} [\rho U_1 U_2] =0,\\
[\rho U_1 U_2] -\f{\xi'(\theta)}{\xi(\theta)} [\rho U_2^2+P] =0,\\
[\rho U_1 U_3]-\f{\xi'(\theta)}{\xi(\theta)} [\rho U_2 U_3]=0,\\
[\mfe+\f{1}{2}|U|^2+\f{P}{\rho}]=0.
\end{cases}
\end{equation}

To state the main results, some weighted H\"{o}lder norms are needed. For any bounded domain $\mcD \subset \mbR^n$, $\mcK\subset \p\mcD$, and $\bx\in \mcD$, define
\be\no
\de_{\bx}:= \text{dist}({\bx},\mcK),\q \text{and}\q \de_{\bx,\ti{\bx}}:= \min(\de_{\bx},\de_{\ti{\bx}}).
\ee
For any nonnegative integer $m$, $\al\in (0,1)$ and $\sigma\in \mbR$, define weighted H\"{o}lder norms by
\be\no
[u]_{k,0; \mcD}^{(\si;\mcK)} &:=& \sum_{|\beta|=k} \sup_{\bx\in \mcD} \de_{\bx}^{\max\{|\beta|+\si, 0\}} |D^{\beta} u({\bx})|,\ k=0,1,\cdots, m,\\\no
[u]_{m,\al;\mcD}^{(\si;\mcK)} &:=& \sum_{|\beta|=m} \sup_{{\bx,\ti{\bx}}\in \mcD, {\bx}\neq \ti{{\bx}}}\de_{{\bx,\ti{\bx}}}^{\max\{m+\al+\si, 0\}} \f{|D^{\beta}u({\bx})-D^{\beta}u({\ti{\bx}})|}{|{\bx-\ti{\bx}}|^{\al}},\\\no
\|u\|_{m,\al;\mcD}^{(\si;\mcK)} &:=& \sum_{k=0}^m [u]_{k,0; \mcD}^{(\si;\mcK)} + [u]_{m,\al; \mcD}^{(\si;\mcK)}.
\ee
$C_{m,\al;\mcD}^{(\si;\mcK)}$ denotes the space of all smooth functions whose $\|\cdot\|_{m,\al;\mcD}^{(\si;\mcK)}$ norms are finite. One can refer to \cite{gh80,gt98,lieberman13} for the  properties of these weighted H\"{o}lder spaces.
Furthermore, $\Omega_\pm$ are defined as follows
\begin{equation*}
\Omega_-:=\{(r,\theta): r_1\leq r\leq \xi(\theta), 0\leq \theta< \theta_0+\epsilon f(r)\}\quad \text{and}\quad \Omega_+:= \Omega\setminus \Omega_-.
\end{equation*}

\bt\lab{transonic}
{\it Assume that $\Gamma$ satisfies  \eqref{wall} and $\bm{\Psi}_{en}$ satisfies \eqref{super3}. There exists a small $\e_0>0$ depending only on the background solution $\bPsi_b$ and boundary data $\bm{\Psi}_p$, $f$, $P_{0}$ such that if $0\leq \e<\e_0$, the problem \eqref{euler-sph1} with \eqref{super1},  \eqref{slip1}, \eqref{pressure}, and \eqref{rh-sph} has a unique solution $\bm{\Psi}^+=(U_1^+,U_2^+,U_3^+,P^+,S^+)(r,\theta)$ with the shock front $\mcS=\{(r, \theta): r=\xi(\theta), \theta\in [0, \theta_*]\}$ satisfying the following properties.
\begin{enumerate}
  \item[(i)] The function $\xi(\theta)\in C_{3,\al; (0,\theta_*)}^{(-1-\al;\{\theta_*\})}$ satisfies
  \be\lab{transonic01}
  \|\xi(\theta)-r_b\|_{3,\al; (0,\theta_*)}^{(-1-\al;\{\theta_*\})}\leq C_0\e,
  \ee
  where $(\xi(\theta_*),\theta_*)$ stands for the intersection circle of the shock surface with the nozzle wall and $C_0$ is a positive constant depending only on the supersonic incoming flow.
  \item[(ii)] The solution $\bm{\Psi}^+=(U_1^+,U_2^+,U_3^+,P^+,S^+)(r,\theta)\in C_{2,\al; \Omega_+}^{(-\al;\Ga_{w,s})}$ satisfies the entropy condition
  \begin{equation}\label{entropycond1}
  P^+(\xi(\theta)+,\theta) >P^-(\xi(\theta)-, \theta)\quad \text{for}\,\, \theta\in [0, \theta_*]
  \end{equation}
   and
  \be\lab{subsonic01}
  \|\bm{\Psi}^+ -\hat{\bm{\Psi}}^+_b\|_{2,\al; \Omega_+}^{(-\al;\Ga_{w,s})}\leq C_0\e,
  \ee
  where
  \be\no
  \Ga_{w,s}=\{(r,\theta): \xi(\theta)\leq r\leq r_2, \theta=\theta_0+\e f(r)\}.
  \ee
\end{enumerate}
}\et

In fact, if the nozzle boundary is straight and the exit pressure satisfies some further compatibility conditions, we have the higher order regularity for both the flows and the shock surface. This is our second main result.
\bt\lab{main2}
{\it Assume that the nozzle wall is straight, i.e., $f(r)\equiv 0$.
If, in addition to \eqref{super3}, the following compatibility conditions
\be\lab{pressure1}
P_{0}'(0)=P_{0}'(\theta_0)=0,
\ee
and
\be\lab{compati2}
\ U_{3,p}^-(\theta_0)=0,\q\f{d}{d\theta} (U_{1,p}^-, U_{3,p}^-,S_p^-)(\theta_0)=0,
\ee
hold
then the system \eqref{euler-sph1} in $\Omega_b$ together with \eqref{super1}, \eqref{pressure}, and the slip boundary conditions
\be\lab{wallslip}
U_2(r,\theta_0)= 0,\q r\in [r_1,r_2].
\ee
 has a unique solution
$\bm\Psi(r,\theta)$ with the shock surface $\mcS=\{(r, \theta): r=\xi(\th), \theta \in [0, \theta_0]\}$
satisfying the following properties.
\begin{enumerate}
  \item[(i)] The function $\xi(\th)\in C^{3,\al}([0,\theta_0])$ satisfies
  \be\lab{shock00}
  \|\xi(\th)-r_b\|_{C^{3,\al}([0,\theta_0])}\leq C_0\e,
  \ee
  where $C_0$ is a positive constant depending only on the supersonic incoming flow and the background solutions.
  \item[(ii)] ${\bm{\Psi}}^+= (U_1^{+},U_2^{+},U_3^{+}, P^{+}, S^{+})(r,\theta)\in C^{2,\al}(\ol{\msR_+})$ satisfies the entropy condition \eqref{entropycond1} with $\theta_*=\theta_0$ and
  \be\lab{subsoinc00}
  \|\bPsi(r,\theta)-\bPsi_b^+(r, \theta)\|_{C^{2,\al}(\ol{\msR_+})}\leq C_0\e,
  \ee
  where $\msR_+=\{(r,\th): \xi(\th)<r<r_2,0<\theta<\theta_0\}$ is the subsonic region.
\end{enumerate}
}\et

We make some comments on the key ingredients of the analysis in this paper. As is well-known, the supersonic flow is fully determined in the whole nozzle when the data at the entrance is given. Therefore, the transonic shock problem is reduced to a free boundary problem in subsonic region where the unknown shock surface is a free boundary and should be determined with the subsonic flow simultaneously, see \cite{lxy13}. In general, the optimal boundary regularity for subsonic flow is $C^{\al}$ for some $\al\in (0,1)$ (see \cite[Remark 3.2 and Lemma 3.3]{xyy09}), hence the streamline may not be uniquely determined. For two dimensional problem, the strategy to overcome this difficulty is to introduce a Lagrangian transformation to straighten the streamline. However, there is a singular term $\sin\th$ in the density equation (cf. \eqref{euler-sph1}) for axisymmetric flows. This makes the Lagrangian transformation (the one used in \cite{lxy13}) not invertible near the axis $\theta=0$. Our key observation is that the singular term $\sin\theta$ is of order $O(\theta)$ so that there is a simple invertible Lagrangian transformation to straighten the streamline.  Although the density equation still preserves the conservation form and a potential function as in \cite{lxy13} can be introduced, it is not easy to represent all the quantities in terms of the potential function and the entropy because the function $\theta$ becomes a nonlocal and nonlinear term in the Lagrangian coordinates. Here we resort to the first order elliptic system for the flow angle and the pressure and look for the solution in the function space $C_{2,\al;\Omega_+}^{(-\al;\Ga_{w,s})}$ rather than the space $C_{1,\al;\Omega_+}^{(-\al;\Ga_{w,s})}$ used in \cite{lxy13}. The axisymmetric Euler system with the shock front equation can be decomposed as a boundary value problem for a first order elliptic system with a nonlocal term and a singular term together with some transport equations. Compared with the elliptic system derived in \cite{lxy10b}, the coefficients for the linearized elliptic system for the angular velocity and pressure are smooth near the axis. One may refer to Proposition \ref{elliptic35} for more details. When the nozzle is a straight cone, even if  the swirl component of the velocity is not zero, the key issue is that $U_3=\p_{\th}U_3=0$ on the axis so that the singular term $\f{U_3^2\cot\theta}{r}$ does not cause any essential difficulty.

The rest of this paper is organized as follows. In Section 2, we introduce a new invertible Lagrangian transformation and reformulate the transonic shock problem in the new coordinates. Then the Euler system is decomposed as an elliptic system of the flow angle and the pressure together with the transport equations for the entropy, the swirl  velocity, and the Bernoulli function. An iteration scheme is developed in Section 3 to prove the existence and uniqueness of the transonic shock problem. In the last section, an improved regularity of the shock front and subsonic solutions is obtained if the nozzle is kept to be straight and some further compatibility conditions are satisfied.

\section{The reformulation of the transonic shock problem}
In this section, we first introduce a Lagrangian transformation to rewrite the Euler system. Then we use a transformation to fix the shock front so that the problem becomes a fixed boundary problem.
\subsection{Lagrangian formulation}
As we mentioned before, in general,
 one  can only expect the $C^{\al}$ boundary regularity for the solution in subsonic region (\cite[Remark 3.2]{xyy09}). To avoid the difficulty to determine the streamline uniquely, we  introduce a Lagrangian transformation to straighten the streamline. Note that there is a singular factor $\sin\theta$ in the density equation of \eqref{euler-sph1}, the standard Lagragian coordinates used in \cite{lxy13} is not invertible near the axis $\theta=0$. Observing that $\sin\theta$ is of order $O(\theta)$ near $\theta=0$, there indeed exists a simple invertible Lagrangian coordinates so that the streamlines can be straightened. Define $(\tilde{y}_1, \tilde{y}_2)=(r, \tilde{y}_2(r,\theta))$ such that
\be\lab{lagrange}
\left\{
\begin{aligned}
&\f{\p \tilde{y}_2}{\p r}= -r \rho^- U_2^- \sin \theta,\\
& \f{\p \tilde{y}_2}{\p\theta}= r^2 \rho^- U_1^-\sin\theta,\\
&\tilde{y}_2(r_1,0)=0,
\end{aligned}
\right.  \quad \text{and}\quad
\left\{
\begin{aligned}
&\f{\p \tilde{y}_2}{\p r}= -r \rho^+ U_2^+\sin\theta,\\
& \f{\p \tilde{y}_2}{\p\theta}= r^2 \rho^+ U_1^+\sin\theta,\\
&\tilde{y}_2(r_1,0)=0
\end{aligned}
\right.
\ee
for $(r,\theta)\in \ol{\Omega_-}$ and $\ol{\Omega_+}$, respectively.
It is clear that $\tilde{y}_2\geq 0$ in $\ol{\Omega}$ as long as $U_1^{\pm}>0$ in $\overline{\Omega^{\pm}}$.
On the axis $\theta=0$ and the nozzle wall $\Gamma$, one has
\be\no
\f{d}{d r} \tilde{y}_2(r,0)=0 \quad \text{and}\quad
\f{d}{dr} \tilde{y}_2(r,\theta_0 +\e f(r))=0.
\ee
Without loss of generality, assume that
\be\no
\tilde{y}_2(r,0)=0\q \text{for all } r\in [r_1,r_2].
\ee
Then there exist two positive constants $M$ and $M_1$ satisfying
\be\no
\tilde{y}_2(r,\theta_0+ \e f(r))=M^2\,\,\text{for }r \in [r_1,r_*] \text{ and } \tilde{y}_2(r,\theta_0+ \e f(r))=M_1^2 \,\,\text{for }r\in [r_*, r_2]
\ee
 respectively,
where $(r_*, \theta_0+ \e f(r_*))$ is the intersection point of the shock front $\mcS$ with the nozzle wall $\Gamma$. We claim that $\tilde{y}_2(r,\theta)$ is well-defined in $\bar{\Omega}$ and belongs to $\text{Lip}(\bar{\Omega})$. Using the first equation in \eqref{rh-sph} yields
\be\no
\f{d}{d\theta}\tilde{y}_2(\xi(\th)+0,\th)= \f{d}{d\theta}\tilde{y}_2(\xi(\th)-0,\th).
\ee
This implies  $M_1=M$ which can be computed as follows
\be\no
M^2= r_1^2 \int_0^{\theta_0} (\rho^- U_1^-)(r_1,\theta)\sin\theta d\theta>0.
\ee

 Set
\begin{eqnarray}\label{Lagtransform}
y_1=r,\ \ y_2= \tilde{y}_2^{\frac{1}{2}}(r,\theta).
\end{eqnarray}
Under the transformation \eqref{Lagtransform}, the domains $\Omega$, $\Omega_-$, and $\Omega_+$ are changed into $D=(r_1,r_2)\times (0,M)$,
\be\lab{domain}
D_-=\{(y_1,y_2): r_1<y_1<\psi(y_2), y_2\in (0,M)\}, \quad \text{and}\quad D_+=D\setminus \overline{D_-},
\ee
respectively. Note that if $(\rho^{\pm}, U_1^{\pm}, U_2^{\pm})$ are close to the background solution $(\rho_b^{\pm}, U_b^{\pm},0)$, then there exist two positive constants $C_1$ and $C_2$  depending only on the background solution such that
\be\no
C_1 \theta^2 \leq \tilde{y}_2(r,\theta)= r^2 \int_0^{\theta} (\rho^{\pm} U_1^{\pm})(r,\tau) \sin \tau d\tau \leq C_2 \theta^2.
\ee
Hence $\sqrt{C_1} \theta\leq y_2(r,\theta)\leq \sqrt{C_2} \theta$ and the Jacobian of the transformation $\mc{L}: (r,\theta)\in \bar{\Omega}\mapsto (y_1,y_2)=(r,y_2(r,\theta))\in \bar{D}$ satisfies
\be\lab{jacobian}
\det \left(\begin{array}{ll}
\f{\p y_1}{\p r} & \f{\p y_1}{\p \theta}\\
\f{\p y_2}{\p r} & \f{\p y_2}{\p \theta}
\end{array}\right)= \det \left(\begin{array}{ll}
\q 1\q & \q 0\q\\
-\f{r \rho U_2\sin\theta}{ 2 y_2} & \f{r^2 \rho U_1 \sin\theta}{2 y_2}
\end{array}\right)= \f{r^2 \rho U_1 \sin\theta}{2 y_2}\geq C_3>0,
\ee
where $C_3$ is a constant depending only on the background solution. Hence the inverse transformation $\mc{L}^{-1}:(y_1,y_2)\mapsto (r,\theta)$ exists.
To simplify the notations, we neglect the superscript ``+" for the solutions in the subsonic region.
Under the transformation \eqref{lagrange}, the Euler system  \eqref{euler-sph1} can be written as
\be\lab{euler-lag'}\bca
\p_{y_1}\b(\f{2 y_2}{y_1^2 \rho U_1\sin\theta}\b)- \p_{y_2}\b(\f{U_2}{y_1 U_1}\b)=0,\\
\p_{y_1}(U_1+\f{P}{\rho U_1})- \f{y_1\sin\theta}{2 y_2} \p_{y_2} (\f{P U_2}{U_1})-\f{2 P}{y_1\rho U_1}-\f{P U_2\cos\theta}{y_1\rho U_1^2\sin\theta}-\f{ (U_2^2+U_3^2)}{y_1 U_1}=0,\\
\p_{y_1}(y_1 U_2) + \f{y_1^2\sin\theta}{2 y_2}\p_{y_2} P -\f{U_3^2}{U_1}\cot\theta=0,\\
\p_{y_1}(y_1 U_3\sin\theta)=0,\\
\p_{y_1} B=0.
\eca\ee
The nozzle wall $\Ga_{w,s}$ is straightened to be $\Ga_{w,y}=(\psi(M),r_2)\times \{M\}$. Suppose that the shock front $\mc{S}$ and the flows ahead and behind $\mc{S}$ are denoted by $y_1=\psi(y_2)$ and $(U_1^{\pm},U_2^{\pm},U_3^{\pm},P^{\pm},S^{\pm})(y)$, respectively. Then the Rankine-Hugoniot conditions on $\mc{S}$, \eqref{rh-sph}, become
\be\lab{rh10}\bca
\f{2y_2}{\psi(y_2)\sin\theta}[\f{1}{\rho U_1}] + \psi'(y_2) [\f{U_2}{U_1}]=0,\\
[U_1 + \f{P}{\rho U_1}] + \psi'(y_2)\f{\psi(y_2)\sin\theta}{2y_2} [ \f{P U_2}{U_1}]=0,\\
[U_2]- \psi'(y_2)\f{\psi(y_2)\sin\theta}{2y_2}[P]=0,\\
[U_3]=0,\\
[B]=0,
\eca\ee
where $[g]=g(\psi(y_2)+, y_2)-g(\psi(y_2)-, y_2)$.

It should be emphasized that in terms of the new coordinates $(y_1, y_2)$,  $\theta$  becomes nonlinear and nonlocal. Indeed, one has
\be\lab{theta}
\f{\p\theta}{\p y_1}=\f{U_2}{y_1 U_1},\q \f{\p\theta}{\p y_2}=\f{2 y_2}{y_1^2\rho U_1\sin\theta},\q \theta(y_1,0)= 0.
\ee
Thus  it holds that
\be\lab{theta1}
\theta(y_1,y_2)= \arccos \b(1-\int_0^{y_2}\f{2s}{y_1^2 (\rho U_1)(y_1,s)} ds\b).
\ee
For the background solution $(\rho_b^{\pm}, U_b^{\pm})$, the similar Lagrangian transformation yields
\be\no
\f{\p\theta_b}{\p y_2}= \f{2 y_2}{y_1^2(\rho_b U_b)(y_1)\sin\theta}= \f{2\ka_b y_2}{\sin\theta_b},
\ee
where
\begin{equation}\label{defkb}
\ka_b=\f{1}{y_1^2 (\rho_b U_b)(y_1)}
\end{equation}
 is a positive constant for any $y_1\in [r_b,r_2]$. Hence
\be\lab{background-theta}
\theta_b(y_2)= \arccos (1-\ka_b y_2^2).
\ee
\subsection{The elliptic modes}
 Note that there is a singular factor $\cot\theta$ in \eqref{euler-lag'}, which is also a nonlinear and nonlocal term because of \eqref{theta1}. In order to study the system \eqref{euler-lag'}, we need to focus on the governing equations for  the pressure and the flow angle. Denote $\varpi=\f{U_2}{U_1}$. Due to the first equation in  \eqref{euler-lag'},  the second and third equations in \eqref{euler-lag'} can be written as
\be\lab{elliptic11}
\left\{
\begin{aligned}
&\p_{y_1} \varpi - \f{y_1 \rho U_1\varpi\sin\theta}{2 y_2} \p_{y_2} \varpi- \f{\varpi}{y_1} -\f{\varpi^2}{y_1}\cot\theta + \f{y_1\sin\theta}{2 y_2 U_1} \p_{y_2} P\\
&\qquad - \f{\varpi}{\rho c^2(\rho, S)} \p_{y_1} P- \f{U_3^2}{y_1 U_1^2} \cot\theta=0,\\
&\p_{y_1} P- \f{\rho c^2(\rho,S) U_1^2}{y_1(c^2(\rho,S)-U_1^2)} \f{y_1^2 \rho U_1\sin\theta}{2y_2}\p_{y_2}\varpi-\f{y_1\rho c^2(\rho,S)U_1 \varpi\sin\theta}{2 y_2(c^2(\rho,S)-U_1^2)} \p_{y_2}P \\
& \q\q-\f{\rho c^2(\rho,S)U_1^2}{y_1(c^2(\rho,S)-U_1^2)}(\varpi^2+ \varpi \cot\theta+2)-\f{\rho c^2(\rho,S)U_3^2}{y_1(c^2(\rho,S)-U_1^2)}=0,
\end{aligned}
\right.
\ee
where one used the following equation for the entropy,
\be\lab{entropy1}
\p_{y_1} S=0.
\ee
In fact, the equation \eqref{entropy1} can be obtained from \eqref{euler-lag'} together with the definition of the equation of the state \eqref{eqstate}.
It follows from \eqref{slip1} and \eqref{pressure}  that the corresponding boundary conditions for $\varpi$ and $P$ read
\be\lab{boundary11}\begin{cases}
\varpi(y_1,0)=0,\q \varpi(y_1,M)= \e y_1 f'(y_1),\q \text{for any } y_1\in [r_1, r_2],\\
P(r_2,y_2)= P_e + \e P_0(\theta(r_2,y_2)),\q \text{for any } y_2\in [0,M].
\end{cases}\ee

By the third equation in \eqref{rh10}, one has
\be\lab{shock11}
\psi'(y_2)=\f{2y_2}{\sin\theta(\psi(y_2),y_2)}\f{U_2(\psi(y_2),y_2)-U_2^-(\psi(y_2),y_2)}{\psi(y_2)(P(\psi(y_2),y_2)-P^-(\psi(y_2),y_2))}.
\ee
Substituting \eqref{shock11} into the first two equations in \eqref{rh10} yields that
\be\lab{rh12}\begin{cases}
[\rho U_1]= \rho U_1 \rho^- U_1^- \f{[U_2]}{[P]} \left[\f{U_2}{U_1}\right],\\
[\rho U_1^2+ P]= -\rho^- U_1^-  \f{[U_2]}{[P]} \left[\f{P U_2}{U_1}\right]+ (\rho (U_1)^2+P) \rho^- U_1^-  \f{[U_2]}{[P]} \left[\f{U_2}{U_1}\right].
\end{cases}
\ee
Furthermore, the last two equations in \eqref{rh10} are equivalent to
\be\lab{rh13}
\begin{aligned}
&U_3(\psi(y_2),y_2)= U_3^-(\psi(y_2),y_2)\quad\text{and}\quad  B(\psi(y_2),y_2)= B^-(\psi(y_2),y_2).
\end{aligned}
\ee

It follows from the Bernoulli's law, the last equation in \eqref{euler-lag'}, that one can represent $U_1$ as
\be\no
U_1=\sqrt{\f{2 B-U_3^2-\f{2 A^{\f1{\ga}}\ga}{\ga-1} P^{\f{\ga-1}{\ga}}e^{\frac{S}{\ga c_{v}}}}{1+\varpi^2}}.
\ee
Hence we can write $\rho U_1$ and $\rho U_1^2+P$ as smooth functions of $P$, $S$, $B$, $U_3$, and $\varpi$.
Note that
\[
(\rho_b^+U_b^+)(r_b)=(\rho_b^-U_b^-)(r_b)\quad\text{and}\quad  (\rho_b^+(U_b^+)^2+P_b^+)(r_b)=(\rho_b^-(U_b^-)^2+P_b^-)(r_b)
\]
Applying the Taylor's expansion for \eqref{rh12} yields
\be\label{BCPS}
\left\{
\begin{aligned}
&a_{11}(P(\psi(y_2),y_2)-P_b^+(r_b)) + a_{12} (S(\psi(y_2),y_2)-S_b^+) \\
&\q= - \f{\rho_b^+(r_b)}{U_b^+(r_b)}(B(\psi(y_2),y_2)-B_b^+)-\f{2 (\rho_b^- U_b^-)(r_b)}{r_b} (\psi(y_2)-r_b)+ R_1,\\
&a_{21}(P(\psi(y_2),y_2)-P_b^+(r_b)) + a_{22} (S(\psi(y_2),y_2)-S_b^+) \\
&\q= - 2\rho_b^+(r_b)(B(\psi(y_2),y_2)-B_b^+)-\f{2 (\rho_b^- (U_b^-)^2)(r_b)}{r_b} (\psi(y_2)-r_b)+ R_2,
\end{aligned}
\right.
\ee
where
\be\no
&&a_{11}=\f{(U_b^+(r_b))^2-c^2(\rho_b^+(r_b),S_b^+)}{U_b^+(r_b)c^2(\rho_b^+(r_b),S_b^+)},\ a_{12}=-\f{(U_b^+(r_b))^2+ \f{1}{\ga-1}c^2(\rho_b^+(r_b),S_b^+)}{c_v U_b^+(r_b)c^2(\rho_b^+(r_b),S_b^+)}P_b^+(r_b),\\\no
&&a_{21}=\f{(U_b^+(r_b))^2-c^2(\rho_b^+(r_b),S_b^+)}{c^2(\rho_b^+(r_b),S_b^+)},\ a_{22}=-\f{(U_b^+(r_b))^2+ \f{2}{\ga-1}c^2(\rho_b^+(r_b),S_b^+)}{c_v c^2(\rho_b^+(r_b),S_b^+)}P_b^+(r_b)
\ee
and $R_i=R_i (\bm{\Phi}^+(\psi(y_2),y_2)-\bm{\Phi}_b^+(r_b), \psi(y_2)-r_b,\bm{\Phi}^-(\psi(y_2),y_2)-\bm{\Phi}_b^-(\psi(y_2)))$  ($i=1$, $2$) denotes the error term with
\begin{equation}
\begin{aligned}
\bm{\Phi}^{\pm}:=& (U_1^{\pm},\varpi^{\pm},U_3^{\pm},P^{\pm},S^{\pm})\quad \text{and}\quad  \bm{\Phi}_b^{\pm}:= (U_b^{\pm},0,0,P_b^{\pm},S_b^{\pm})
\end{aligned}
\end{equation}
Later on, we denote $\bPhi^+$ by $\bPhi$ for simplicity.
Furthermore, for $i=1$ and $2$, straightforward computations give
\begin{equation}\label{estRi}
|R_i|\leq C (|\bm{\Phi}(\psi(y_2),y_2)-\bm{\Phi}_b^+(r_b)|^2+|\psi(y_2)-r_b|^2+|\bm{\Phi}^-(\psi(y_2),y_2)-\bm{\Phi}_b^-(\psi(y_2))|).
\end{equation}
 It follows from \eqref{com-euler} and \eqref{RH_background} that $B_b^+=B_b^-$. This, together with \eqref{rh13}, yields
\[
B(\psi(y_2),y_2)-B_b^+= B^-(\psi(y_2),y_2)-B_b^-.
\]
Hence one has
\be\lab{rh15}\begin{cases}
P(\psi(y_2),y_2)-P_b^+(r_b)= e_1 (\psi(y_2)-r_b)+ R_3,\\
S(\psi(y_2),y_2)-S_b^+= e_2 (\psi(y_2)-r_b) +R_4,
\end{cases}\ee
where $R_i$ ($i=3$, $4$) satisfies the similar estimate as \eqref{estRi},
\be
\begin{aligned}
e_1=&2\f{c_v (\rho_b^- U_b^-)(r_b) c^2(\rho_b^+(r_b),S_b^+)}{r_b((U_b^+(r_b))^2-c^2(\rho_b^+(r_b),S_b^+))}\b(U_b^-(r_b)\b((U_b^+(r_b))^2+\f{1}{\ga-1}c^2(\rho_b^+(r_b),S_b^+)\b)\\\no
&\q\q-U_b^+(r_b)\b((U_b^+(r_b))^2+\f{2}{\ga-1}c^2(\rho_b^+(r_b),S_b^+)\b)\b),
\end{aligned}
\ee
and
\be\label{defe2}
e_2= \f{2(\ga-1)c_v}{r_b} \f{(\rho_b^- U_b^-)(r_b)}{P_b^+(r_b)}(U_b^-(r_b)-U_b^+(r_b)).
\ee
Clearly, $e_2>0$.
\subsection{Fix the domain and the reformulation of the problem}
To fix the shock front, we introduce the following  coordinate transformation
\be\no
z_1=\f{y_1-\psi(y_2)}{r_2-\psi(y_2)} N\quad \text{and}\quad  z_2=y_2 \quad \text{with}\quad  N=r_2-r_b.
\ee
Clearly, the domain $D_+$ and the wall $\Ga_{w,y}$ are changed into
\be\no
E_+=(0,N)\times (0,M)\quad  \text{and} \quad \Ga_{w,z}=(0,N)\times \{M\},
\ee
respectively.
Define
\be\no
&&(\ti{\rho}_b^+,\ti{U}^+_b,\ti{P}_b^+)(z_1)= (\rho_b^+, U_b^+, P_b^+)(r_b+z_1),\\\no
&& (\tilde{\rho},\tilde{U}_1, \ti{\varpi}, \tilde{U}_3, \ti{P},\ti{S},\ti{B},\ti{\theta})(z)= (\rho,U_1, \varpi, U_3, P,S, B,\theta)\b(\psi(z_2)+\f{r_2-\psi(z_2)}{N}z_1,z_2\b).
\ee
Set $\bW:=(W_1, W_2, W_3, W_4, W_5, W_6)$ with
\be\no
&&W_1(z)= \ti{U}_1(z)-\ti{U}^+_b(z_1),\ W_2(z)=\ti{\varpi}(z),\quad W_3(z)=\tilde{U}_3(z),\\\no
&&W_4(z)=\ti{P}(z)-\ti{P}_b^+(z_1),\ W_5(z)= \ti{S}(z)- S_b^+,\  W_6(z_2)=\psi(z_2)-r_b,
\ee
and
\begin{equation}
W_6^\diamondsuit(z_2)=r_b+W_6(z_2),\quad
W_6^\#(z_1, z_2) = r_b+z_1+\f{N-z_1}{N}W_6(z_2).
\end{equation}
In terms of the coordinates $(z_1, z_2)$, the equation \eqref{shock11} becomes
\be\lab{shock12}
W_6'(z_2)=\f{2z_2}{\sin\theta(0,z_2)}\f{(\ti{U}_b^+(0)+W_1(0,z_2))W_2(0,z_2)-U_2^-(W_6^\diamondsuit(z_2),z_2)}
{W_6^\diamondsuit(z_2)((\ti{P}_b^+(0)+W_4(0,z_2))-P^-(W_6^\diamondsuit(z_2),z_2))}.
\ee

It follows from the last equation in \eqref{euler-lag'} and \eqref{entropy1} that one has
\be\lab{entropy11}
\p_{z_1} W_5=0\q\text{and}\q \p_{z_1} \tilde{B}=0,\ \text{ in } E_+.
\ee
This, together with \eqref{rh13} and the second equation in \eqref{rh15}, gives
\begin{equation}\lab{entropy12}
\begin{aligned}
W_5(z)=& W_5(0,z_2)
=  e_2 W_6(z_2) \\
&+R_4(\bm{\Phi}(W_6^\diamondsuit(z_2),z_2)-\bm{\Phi}_b^+(r_b), W_6(z_2),\bm{\Phi}^-(W_6^\diamondsuit(z_2),z_2)-\bm{\Phi}_b^-(W_6^\diamondsuit(z_2))),\\
\end{aligned}
\end{equation}
and
\be\lab{bernoulli10}
B(z)-B_b^+= B(0,z_2)-B_b^+= B^-(W_6^\diamondsuit(z_2),z_2)-B_b^-.
\ee
It follows from the fourth equations in \eqref{euler-lag'} and \eqref{rh10} that
\be\lab{swirl11}\begin{cases}
\p_{z_1}[W_6^\#(z_1, z_2)W_3\sin\theta(z_1,z_2)]=0,\\
W_3(0,z_2)= U_3^-(W_6^\diamondsuit(z_2),z_2).
\end{cases}\ee
This yields
\be\lab{swirl12}
W_3(z)&=& \f{W_6^\diamondsuit(z_2)}{W_6^\#(z_1,z_2)} \f{\sin\theta(0,z_2)}{\sin\theta(z_1,z_2)} U_3^-(W_6^\diamondsuit(z_2),z_2).
\ee

 Note that
\be\no
U_1(y_1,y_2)= (\ti{U}_b^+ + W_1)\left(\f{y_1-W_6^\diamondsuit(y_2)}{N-W_6(y_2)}N, y_2\right).
\ee
Then it follows from \eqref{theta1} that
\be\lab{theta11}
\begin{aligned}
\theta(z_1,z_2)=& \arccos(1-\vartheta(z_1,z_2)),
\end{aligned}
\ee
where
\begin{equation}
\vartheta(z_1,z_2)= \int_0^{z_2} \f{2s}{(W_6^\#(z_1,z_2))^2\{\varrho(W_4, W_5)(\ti{U}_b^+ +W_1)\}\b(\f{W_6^\#(z_1,z_2)-W_6^\diamondsuit(s)}{N-W_6(s)}N,s\b)}ds
\end{equation}
with
\begin{equation}\label{defvarrho}
\varrho(W_4,W_5)= A^{-\f1{\ga}} (\tilde{P}_b^++ W_4)^{\f1{\ga}} e^{-\f{S_b^++W_5}{\ga c_v}}.
\end{equation}

The Bernoulli's law \eqref{bernoulli10} together with the Rankine-Hugoniot conditions \eqref{rh13} yields
\begin{equation}\lab{bernoulli11}
\begin{aligned}
&\left\{\f12 (\ti{U}_b^+ + W_1)^2 (1+W_2^2) +\f12 W_3^2 + h(\ti{P}_b^+ + W_4, S_b^+ +W_5)\right\}(W_6^\diamondsuit(z_2), z_2)\\
=& B^-(W_6^\diamondsuit(z_2),z_2).
\end{aligned}
\end{equation}
Since $B_b^-=B_b^+= \f12(\ti{U}_b^+)^2+h(\ti{P}_b^+, S_b^+)$, one has
\begin{equation}\lab{bernoulli12}
\begin{aligned}
W_1=&\f{1}{\ti{U}_b^+}\b\{B^-(W_6^\diamondsuit(z_2),z_2)-B_b^- -[h(\ti{P}_b^+ + W_4, S_b^+ +W_5)-h(\ti{P}_b^+, S_b^+)]\b\}\\
&\q \q-\f{1}{2 \ti{U}_b^+}[W_1^2 + (\ti{U}_b^++ W_1)^2 W_2^2+W_3^2].
\end{aligned}
\end{equation}

Finally, we rewrite the system \eqref{elliptic11} in terms of $W_2$ and $W_4$.
Note that
\be\lab{background}
\f{d}{dz_1}\ti{P}_b^+ -\f{2\ga \ti{P}_b^+(\ti{U}_b^+)^2}{(r_b+z_1)(c^2(\ti{\rho}_b^+,S_b^+)-(\ti{U}_b^+)^2)}=0.
\ee
Then straightforward calculations yield that
\be\no
&\q&-\f{2\ga \tilde{P} \tilde{U}_1^2}{\b(\psi(z_2)+\f{r_2-\psi(z_2)}{N} z_1\b)(c^2(\tilde{\rho},\tilde{S})-\tilde{U}_1^2)}+\f{2\ga}{r_b+z_1} \f{\ti{P}_b^+(\ti{U}_b^+)^2}{c^2(\ti{\rho}_b^+,S_b^+)-(\ti{U}_b^+)^2}\\\no
&=&e_3(z_1) (\tilde{B}(z)-B_b^+)+ e_4(z_1) W_4 + e_5(z_1) W_5 +\ti{e}_6(z_1) W_6(z_2)+ R_5({\bf W}),
\ee
where
\be\no
e_3(z_1)&=& \f{4\ga \ti{P}_b^+ c^2(\ti{\rho}_b^+, S_b^+)}{c^2(\ti{\rho}_b^+, S_b^+)-(\ti{U}_b^+)^2},\\\no
e_4(z_1)&=&\f{2\ga}{(r_b+z_1)\ti{\rho}_b^+(c^2(\ti{\rho}_b^+, S_b^+)-(\ti{U}_b^+)^2)}(\ti{\rho}_b^+(\ti{U}_b^+)^4- P_b^+ (\ti{U}_b^+)^2+ 2 \ti{P}_b^+c^2(\ti{\rho}_b^+, S_b^+)),\\\no
e_5(z_1)&=& \f{2\ga (\ti{P}_b^+)^2 ((\ti{U}_b^+)^2+\f{2}{\ga-1}c^2(\ti{\rho}_b^+, S_b^+))}{c_v(r_b+z_1)\ti{\rho}_b^+(c^2(\ti{\rho}_b^+, S_b^+)-(\ti{U}_b^+)^2)^2},\\\no
\ti{e}_6(z_1)&=&\f{2\ga(N-z_1) \ti{P}_b^+ (\ti{U}_b^+)^2}{N(r_b+z_1)^2(c^2(\ti{\rho}_b^+, S_b^+)-(\ti{U}_b^+)^2)},
\ee
and $R_5$ is quadratic with respect to ${\bf W}$. Clearly, one has
\[
e_3,\, e_4,\, e_5>0.
\]
Therefore, it follows from \eqref{elliptic11} that
\begin{equation}\lab{elliptic13}
\left\{
\begin{aligned}
&\p_{z_1} W_2-\f{c^2(\ti{\rho}_b^+,S_b^+)+(\ti{U}_b^+)^2}{(r_b+z_1)(c^2(\ti{\rho}_b^+,S_b^+)-(\ti{U}_b^+)^2)}W_2 + \f{r_b+z_1}{\ti{U}_b^+}\f{\sin\theta_b(z_2)}{2z_2} \p_{z_2} W_4 \\
&\q\q+ \f{r_b+z_1}{\ti{U}_b^+}\f{z_1-N}{N}\f{d}{d z_1}\ti{P}_b^+ \f{\sin\theta_b(z_2)}{2z_2} W_6'(z_2)= F_1({\bf W}, \nabla {\bf W}, \bm{\Phi}^- -\bm{\Phi}_b^-),\\
&\p_{z_1}W_4-\f{\ga\ti{P}_b^+(\ti{U}_b^+)^2}{c^2(\ti{\rho}_b^+,S_b^+)-(\ti{U}_b^+)^2}\f{1}{\ka_b(r_b+z_1)}\f{\sin\theta_b(z_2)}{2z_2} \b(\p_{z_2}W_2+ \f{2\ka_b z_2\cos\theta_b(z_2)}{\sin^2\theta_b(z_2)}W_2\b)\\
&\q\q+e_4(z_1) W_4(z)+ e_5(z_1) W_5(z)+e_6(z_1)W_6(z_2)=F_2({\bf W}, \nabla {\bf W}, \bm{\Phi}^- -\bm{\Phi}_b^-)
\end{aligned}
\right.
\end{equation}
where
$F_1({\bf W}, \nabla {\bf W}, \bm{\Phi}^- -\bm{\Phi}_b^-)$
and
$F_2({\bf W}, \nabla {\bf W}, \bm{\Phi}^- -\bm{\Phi}_b^-)$ are quadratic with respect to $\bW$ and $\nabla \bW$
and
\be\no
e_6(z_1)&=&\ti{e}_6(z_1)+\f{1}{N}\f{d}{dz_1}\ti{P}_b^+(z_1)=\f{2\ga r_2 \ti{P}_b^+ (\ti{U}_b^+)^2}{N(r_b+z_1)^2(c^2(\ti{\rho}_b^+, S_b^+)-(\ti{U}_b^+)^2)}.
\ee
Clearly, the system \eqref{elliptic13} should be supplemented with the following boundary conditions
\begin{equation}\label{BCW24}
\begin{aligned}
&W_4(0,z_2)= e_1 W_6(z_2) +R_3({\bf W}(0,z_2),\bm{\Phi}^--\bm{\Phi}_b^-),\\
&W_2(z_1,0)=0,\q \text{for } z_1\in [0,N],\\
&W_2(z_1,M)= \e  W_6^\#(r_1,M)) f'(W_6^\#(r_1,M)),\q \text{for } z_1\in [0,N],\\
&W_4(N,z_2)=\e P_0(\theta(N,z_2)),\q \text{for}\,\, z_2\in [0,M].
\end{aligned}
\end{equation}
Therefore, the original problem is equivalent to \eqref{shock12}, \eqref{entropy12},  \eqref{swirl12}, \eqref{bernoulli12}, and \eqref{elliptic13}-\eqref{BCW24}.

\section{Iteration scheme and Proof of Theorem \ref{transonic}}\noindent

We are now in position to design an iteration scheme to prove Theorem \ref{transonic}. The approach is motivated by \cite{lxy13}.  Define
\be\lab{class}
\Xi_{\de}=\left\{ {\bf W}\left| \begin{aligned}
&\||{\bf W}|\| \leq \delta; \,\, \p_{z_2} W_j(z_1,0)=0, j=1,3,4,5;\\
  & W_2(z_1,0)=\p_{z_2}^2 W_2(z_1,0)=W_5(z_1,0)=0;\,\, W_6'(0)=W_6^{(3)}(0)=0
\end{aligned}
\right.
\right\},
\ee
where
\[
\||{\bf W}|\|= \sum_{i=1}^5 \|W_i\|_{2,\al; E_+}^{(-\al;\Ga_{w,z})}+ \|W_6\|_{3,\al; (0,M)}^{(-1-\al;\{M\})}.
\]
Clearly, $\Xi_\de$ is a complete metric space under the metric $d(\bW, \hat{\bW})=\||\bW-\hat{\bW}|\|$.
 Given any $\hat{{\bf W}}\in \Xi_{\de}$, we  use an iteration to define a mapping with $\mc{T} \hat{\bf{W}}={\bf W}$   from $\Xi_{\de}$ to itself by choosing suitable small $\de$.

\subsection{The iteration scheme for $W_6$, $W_5$, and $W_3$}
It follows from \eqref{shock12}  that $W_6$ is required to satisfy the following equation
\be\no
&&W_6'(z_2)= a \f{2z_2}{\sin\theta_b(z_2)} W_2(0,z_2)+ R_{11}(\hat{{\bf W}}(0,z_2),\bm{\Phi}^-(\hat{W}_6^\diamondsuit(z_2),z_2)-\bm{\Phi}^-_b(\hat{W}_6^\diamondsuit(z_2))),
\ee
where $\hat{W}_6^\diamondsuit(z_2)=\hat{W}_6(z_2)+r_b$,  $R_{11}$ is quadratic with respect to $\hat{\bW}(0, z_2)$, and
\be\label{defa}
&&a=\f{\ti{U}_b^+(0)}{r_b(\ti{P}_b^+(0)-P^-_b(r_b))}.
\ee
Hence $W_6$ can be solved as follows
\be\lab{shock23}
W_6(z_2)= W_6(M)-a\int_{z_2}^M \f{2s}{\sin\theta_b(s)} W_2(0,s)ds +R_{12},
\ee
where $\theta_b$ is defined in \eqref{background-theta} and
\be\no
R_{12}(\hat{{\bf W}},\bm{\Phi}^--\bm{\Phi}^-_b)=-\int_{z_2}^M R_{11}(\hat{{\bf W}}(0,s), \bm{\Phi}^-(\hat{W}_6^\diamondsuit(s),s)-\bm{\Phi}^-_b(r_b)) ds.
\ee
We also note that for $\hat{{\bf W}}\in \Xi_{\de}$, $R_{11}(z_1,0)=\p_{z_2}^2 R_{11}(z_1,0)=0$ for any $z_1\in [0,N]$.

 Since $\p_{z_1} W_5=0$, one has
\be\lab{entropy21}
W_5(z)= W_5(0,z_2)
= e_2W_6(z_2)+ R_{4}(\hat{\bW}, \bPhi^--\bPhi_b^-),
\ee
where $e_2$ is defined in \eqref{defe2}.
It is easy to verify that $\p_{z_2} R_4(z_1,0)=0$ for $\hat{{\bf W}}\in \Xi_{\de}$.

It follows from \eqref{swirl12} that one defines
\be\lab{swirl22}
W_3(z_1,z_2)&=&\f{\hat{W}_6^\diamondsuit(z_2)}{\hat{W}_6^\#(z_1,z_2)} \f{\sin\hat{\theta}(0,z_2)}{\sin\hat{\theta}(z_1,z_2)} U_3^-(\hat{W}_6^\diamondsuit(z_2),z_2),
\ee
where
$\hat{W}_6^\#(z_1, z_2) = r_b+z_1+\f{N-z_1}{N}\hat{W}_6(z_2)$ and $\hat{\theta}(z_1, z_2) =\arccos (1-\hat{\vartheta}(z_1, z_2))$ with
\begin{equation}\lab{theta21}
\hat{\vartheta}(z_1,z_2)= \int_0^{z_2} \f{2s}{(\hat{W}_6^\#(z_1, z_2))^2\b\{\varrho(\hat{W}_4,\hat{W}_5)(\ti{U}_b^+ +\hat{W}_1)\b\}\b(\f{\hat{W}_6^\#(z_1,z_2)-\hat{W}_6^\diamondsuit(s)}{N-\hat{W}_6(s)}N,s\b)}ds,
\end{equation}
where $\varrho$ is the function defined in \eqref{defvarrho}.
 Note that $\f{\hat{W}_6^\#(z_1,z_2)-\hat{W}_6^\diamondsuit(s)}{N-\hat{W}_6(s)}N$ may exceed the interval $[0,N]$, hence we  extend the functions $\hat{{\bf W}}$ to a larger domain $[-N, 2N]\times [0,M]$ as follows
\be\lab{extension}
\hat{{\bf W}}^e(z_1,z_2)=\begin{cases}
\sum_{k=1}^3 c_k\hat{{\bf W}}(-\f{z_1}{k},z_2),&\q -N\leq z_1<0,\\
\sum_{k=1}^3 c_k\hat{{\bf W}}(\f{2N-z_1}{k},z_2),&\q N<z_1\leq 2N,
\end{cases}\ee
where the constants $c_k$ ($k=1,2,3$) satisfy the following algebraic relations
\be\lab{vandermon}
\sum_{k=1}^3 c_k= 1,\ \ \ -\sum_{k=1}^3 \frac{c_k}{k} =1,\ \ \ \sum_{k=1}^3 \f{c_k }{k^2}=1.
\ee
 It is easy to see that the extended functions  $\hat{{\bf W}}^e$ belong to $C^2$ as long as $\hat{{\bf W}}\in C^2$. For ease of notations, we still denote these extended functions by $\hat{{\bf W}}$.

\subsection{The iteration scheme for $W_2$ and $W_4$}
Substituting \eqref{shock23} and \eqref{entropy21} into \eqref{elliptic13} yields that $W_2$ and $W_4$ satisfy the following first order elliptic system with a nonlocal term and a parameter,
\begin{equation}\lab{elliptic23}\begin{cases}
\p_{z_1} W_2-\f{c^2(\ti{\rho}_b^+,S_b^+)+(\ti{U}_b^+)^2}{(r_b+z_1)(c^2(\ti{\rho}_b^+,S_b^+)-(\ti{U}_b^+)^2)}W_2 + \f{r_b+z_1}{\ti{U}_b^+}\f{\sin\theta_b(z_2)}{2z_2} \p_{z_2} W_4 + a\f{r_b+z_1}{\ti{U}_b^+}\f{z_1-N}{N}\f{d}{d z_1}\ti{P}_b^+ W_2(0,z_2)\\
\q\q\q= F_3(\hat{{\bf W}},\nabla\hat{{\bf W}},\bm{\Phi}^--\bm{\Phi}^-_b),\\
\p_{z_1}W_4-\f{\ga\ti{P}_b^+(\ti{U}_b^+)^2}{\ka_b(r_b+z_1)(c^2(\ti{\rho}_b^+,S_b^+)-(\ti{U}_b^+)^2)}\f{\sin\theta_b(z_2)}{2z_2}\b(\p_{z_2}W_2
+\f{2\ka_b z_2\cos\theta_b(z_2)}{\sin^2\theta_b(z_2)} W_2\b)+r_4(z_1)W_4\\
\q+\b(e_6(z_1)+ e_2 e_5(z_1)\b) \b(W_6(M)-a\int_{z_2}^M \f{2s} {\sin\theta_b(s)}W_2(0,s)ds\b)= F_4(\hat{{\bf W}},\nabla\hat{{\bf W}},\bm{\Phi}^--\bm{\Phi}^-_b),\\
W_4(0,z_2)=  e_1 \b(W_6(M)-a\int_{z_2}^M \f{2s}{\sin\theta_b(s)} W_2(0,s)ds\b)+ e_1 R_{12} +R_5(\hat{{\bf W}}(0,z_2),\bm{\Phi}^--\bm{\Phi}_b^-),\\
W_2(z_1,0)=0,\q  z_1\in [0,N],\\
W_2(z_1,M)= \e \hat{W}_6^\#(M) f'(\hat{W}_6^\#(M)),\q  z_1\in [0,N],\\
W_4(N,z_2)=\e P_0(\hat{\theta}(N,z_2)),\q  z_2\in [0,M],
\end{cases}\end{equation}
where
$F_3(\hat{{\bf W}},\nabla\hat{{\bf W}},\bm{\Phi}^--\bm{\Phi}^-_b)$
and
$F_4(\hat{{\bf W}},\nabla\hat{{\bf W}},\bm{\Phi}^--\bm{\Phi}^-_b)$ are quadratic with respect to $\bW$ and $\nabla \bW$.
Since the values $ \hat{W}_6^\#(z_1, M)$ and $\hat{\theta}(N,z_2)$ may exceed the interval $[r_b,r_2]$ and $[0,\theta_0+\e f(r_2)]$, respectively,
one can also extend the functions $f$ and $P_{0}$ smoothly to a larger interval as  in \eqref{extension} and \eqref{vandermon}. The straightforward computations show
 \[
 F_3(\hat{{\bf W}},\nabla\hat{{\bf W}},\bm{\Phi}^--\bm{\Phi}^-_b)(z_1,0)=0\quad\text{and}\quad \p_{z_2} F_4(\hat{{\bf W}},\nabla\hat{{\bf W}},\bm{\Phi}^--\bm{\Phi}^-_b)(z_1,0)=0.
  \]
To obtain the estimate for $F_3$ and $F_4$, we should be careful about the singular terms involving sine and cotangent functions of $\hat{\th}(z)$ and $\theta_b(z_2)$. Note that there exists $\ka_i (i=1, 2)$ depending only on the background solutions such that
\[
\ka_1 z_2\leq \hat{\theta}(z)\leq \ka_2 z_2 \quad \text{for any}\,\, z\in \ol{E_+}. S
\]
Since $\hat{W}_2(z_1,0)=\hat{W}_3(z_1,0)=0$, it is easy to see that
\be\lab{elliptic231}
\sum_{j=2}^3\|\hat{W}_j^2\cot\hat{\theta}(z)\|_{1,\al;E_+}^{(1-\al;\Ga_{w,z})}\leq C \||\hat{{\bf W}}|\|^2.
\ee
Also by \eqref{theta21} and \eqref{background-theta}, one has
\be\no
\begin{aligned}
&\cos\hat{\theta}(z)-\cos\theta_b(z_2)=\f{1}{(r_b+z_1)^2\ti{\rho}^+_b(z_1)\ti{U}_b^+(z_1)} z_2^2-\hat{\vartheta}(z_1, z_2)
\end{aligned}
\ee
and
\begin{equation}
\begin{aligned}
&(\cot\hat{\theta}(z)- \cot\theta_b(z_2))\hat{W}_2(z)
= \f{\hat{W}_2(z)}{\sin\theta_b(z_2)}(\cos\hat{\theta}(z)-\cos\theta_b(z_2))\\
&\qquad \qquad+ \f{\cos\hat{\theta}(\cos\hat{\theta}(z)+\cos\theta_b(z_2))}{\sin\hat{\theta}(z)
+\sin\theta_b(z_2)}\f{\cos\hat{\theta}(z)-\cos\theta_b(z_2)}{\sin\hat{\theta}(z)\sin\theta_b(z_2)} \hat{W}_2(z).
\end{aligned}
\end{equation}
 With the aid of \eqref{elliptic231}, one has
\be\lab{elliptic233}
\sum_{j=3}^4\|F_i(\hat{{\bf W}},\nabla\hat{{\bf W}},\bm{\Phi}^--\bm{\Phi}^-_b)\|_{1,\al;E_+}^{(1-\al;\Ga_{w,z})}\leq C (\e+\||\hat{{\bf W}}|\|^2).
\ee

The crucial part for the analysis is to get the existence of solutions for the problem \eqref{elliptic23}.
Set
\be\no
\la_1(z_1)&=& \text{exp}\b(-\int_0^{z_1} \f{c^2(\ti{\rho}_b^+,S_b^+)+(\ti{U}_b^+)^2}{(r_b+z_1)(c^2(\ti{\rho}_b^+,S_b^+)-(\ti{U}_b^+)^2)} ds\b),\\ \no
\la_2(z_1)&=& \f{r_b+z_1}{\ti{U}_b^+(z_1)} \la_1(z_1),\ \la_3(z_1)= a\f{r_b+z_1}{\ti{U}_b^+(z_1)} \f{(z_1-N)\p_{z_1}\ti{P}_b^+}{N}  \la_1(z_1),\\ \no
\la_4(z_1)&=& \text{exp}\b(\int_0^{z_1} e_3(s) ds\b),\ \la_5(z_1)= \f{\ga \ti{P}_b^+ (\ti{U}_b^+)^2}{\ka_b(r_b+z_1) (c^2(\ti{\rho}_b^+,S_b^+)-(\ti{U}_b^+)^2)} \la_4(z_1),\\\no
\la_6(z_1)&=& \b(e_6(z_1)+e_2 e_4(z_1)\b) \la_4(z_1).
\ee
It is clear that
\begin{equation}
\la_1, \la_2, \la_4>0\quad \text{and}\quad \la_3\leq 0.
\end{equation}
In terms of $\lambda_i$ ($i=1, \cdots, 6$), the problem  \eqref{elliptic23} can be rewritten as
\be\lab{elliptic24}\begin{cases}
\p_{z_1}(\la_1(z_1) W_2) + \f{\sin\theta_b(z_2)}{2z_2}\p_{z_2} (\la_2(z_1) W_4) + \la_3(z_1) W_2(0,z_2)= G_1(z),\\
\p_{z_1}(\la_4(z_1) W_4) -\la_5(z_1) \f{\sin\theta_b(z_2)}{2z_2} (\p_{z_2} W_2 +\f{2\ka_b z_2\cos\theta_b(z_2)}{\sin^2\theta_b(z_2)} W_2 )\\
\q\q+\la_6(z_1)\b(W_6(M)-a \int_{z_2}^M \f{2s}{\sin\theta_b(s)} W_2(0,s) ds\b)= G_2(z),\\
W_4(0,z_2)= e_1 a\b(\f{W_6(M)}{a}- \int_{z_2}^M \f{2s}{\sin\theta_b(s)} W_2(0,s) ds\b)+G_3(z_2),\\
W_4(N,z_2)=\e G_4(z_2) ,\\
W_2(z_1,0)=0,\q W_2(z_1,M)=\e G_5(z_1),
\end{cases}\ee
where $a$ is given in \eqref{defa} and
\be\no
&&G_1(z) =\la_1(z_1) F_3(\hat{{\bf W}},\nabla\hat{{\bf W}},\bm{\Phi}^--\bm{\Phi}^-_b),\ \ G_2(z)= \la_4(z_1) F_4(\hat{{\bf W}},\nabla\hat{{\bf W}},\bm{\Phi}^--\bm{\Phi}^-_b),\\\no
&&G_3(z_2)=e_1 R_{12}(\hat{{\bf W}}(0,z_2),\bm{\Phi}^--\bm{\Phi}^-_b) +R_5(\hat{{\bf W}}(0,z_2)),\ \ G_4(z_2)=P_0(\hat{\theta}(N,z_2)), \\\no
&&G_5(z_1)=  \hat{W}_6^\#(z_1, M) f'(\hat{W}_6^\#(z_1, M)).
\ee
Note that
the first equation in \eqref{elliptic24} can be written  as follows
\be\no
\begin{aligned}
\p_{z_1}\b(\f{2z_2}{\sin\theta_b(z_2)}\la_1(z_1) W_2\b) + \p_{z_2}\b\{ &\la_2(z_1) W_4 + \la_3(z_1)\b(\f{W_6(M)}{a}-\int_{z_2}^{M} \f{2s}{\sin\theta_b(s)} W_2(0,s)ds\b)\\\no
&\q-\int_{z_2}^M G_1(z_1,s) ds\b\}=0.
\end{aligned}
\ee
Hence there exists  a potential function $\phi$ satisfying
\begin{equation}\lab{potential}
\left\{
\begin{aligned}
&\p_{z_1}\phi =\la_2(z_1) W_4 + \la_3(z_1)\b(\f{W_6(M)}{a}-\int_{z_2}^{M}\f{2s}{\sin\theta_b(s)} W_2(0,s)ds\b)-\int_{z_2}^M G_1(z_1,s) ds,\\
&\p_{z_2} \phi = -\la_1(z_1)\f{2z_2}{\sin\theta_b(z_2)} W_2(z),\q \phi(0,M)=0.
\end{aligned}
\right.
\end{equation}
Therefore, $W_2$ and $W_4$ can represented in terms of $\phi$ as follows
\be\lab{potential1}
\left\{
\begin{aligned}
W_2(z)&=-\f1{\la_1(z_1)}\f{\sin\theta_b(z_2)}{2z_2}\p_{z_2} \phi,\\
W_4(z)&= \f{\p_{z_1}\phi}{\la_2(z_1)}-\f{\la_3(z_1)}{\la_2(z_1)}\b(\f{W_6(M)}{a}-\phi(0,z_2)\b)+\f{1}{\la_2(z_1)} \int_{z_2}^M G_1(z_1,s)ds.
\end{aligned}
\right.
\ee
Now, substituting \eqref{potential1} into the second equation and the boundary conditions in \eqref{elliptic24} gives
\begin{equation}\lab{elliptic25}\begin{cases}
\p_{z_1}\b(\f{\la_4(z_1)}{\la_2(z_1)}\p_{z_1}\phi\b) - \b\{a\la_6(z_1)+\f{d}{dz_1}\b(\f{\la_4(z_1)\la_3(z_1)}{\la_2(z_1)}\b)\b\}(\phi(0,z_2)-\f{W_6(M)}{a})\b)\\
\q+ \f{\la_5(z_1)}{\la_1(z_1)} \b(\f{\sin\theta_b(z_2)}{2z_2}\p_{z_2}\b(\f{\sin\theta_b(z_2)}{2z_2}\p_{z_2}\phi\b)+\f{\ka_b\cos\theta_b(z_2)}{2z_2}\p_{z_2}\phi\b)\\
\q\q=\p_{z_2}\b(\int_0^{z_2} G_2(z_1,s)ds\b)- \p_{z_1}\b(\f{\la_4(z_1)}{\la_2(z_1)} \int_{z_2}^M G_1(z_1,s)ds\b),\\
\p_{z_1}\phi(0,z_2)+ (a \la_2(0)e_1+\la_3(0)) \b(\phi(0,z_2)-\f{W_6(M)}{a}\b)=\la_2(0)G_3(z_2)- \int_{z_2}^M G_1(0,s)ds,\\
\p_{z_1}\phi(N,z_2)= \epsilon \la_2(N) P_{0}(\hat{\theta}(N,z_2))- \int_{z_2}^M G_1(N,s)ds,\\
\p_{z_2}\phi(z_1,0) =0,\\
\p_{z_2}\phi(z_1,M)= -\f{2M}{\sin\theta_b(M)}\la_1(z_1)\e ( \hat{W}_6^\#(z_1, M))f'( \hat{W}_6^\#(z_1, M)).
\end{cases}\end{equation}

To simplify the notations, we define
\be\no
&&a_1(z_1)= \f{\la_4(z_1)}{\la_2(z_1)},\ a_2(z_1)=\f{\la_5(z_1)}{\la_1(z_1)}, \ a_3(z_1)=\b\{a\la_6(z_1)+\f{d}{dz_1}\b(\f{\la_4(z_1)\la_3(z_1)}{\la_2(z_1)}\b)\b\},\\\no
&&a_4=a e_1\la_2(0)+\la_3(0),\  \mu=-\f{W_6(M)}{a},\
\mcG_1(z_2)=\la_2(0)G_3(z_2)- \int_{z_2}^M G_1(0,s)ds\\\no
&&\mcF_1(z)=-\f{\la_4(z_1)}{\la_2(z_1)} \int_{z_2}^M G_1(z_1,s)ds,\ \ \  \mcF_2(z)= \int_0^{z_2} G_2(z_1,s)ds,\\\no
&& \mcG_2(z_2)= \e\la_2(N) P_{0}(\hat{\theta}(N,z_2))- \int_{z_2}^M G_1(N,s)ds,\ \  \mcG_3(z_1)=-\f{2M}{\sin\theta_b(M)} \la_1(z_1)G_5(z_1),\\\no
&& \mfd_1(z_2)= \f{\sin\theta_b(z_2)}{2z_2},\q \mfd_2(z_2)=\f{\ka_b \cos\theta_b(z_2)}{2z_2}.
\ee
It follows from \eqref{elliptic233} that
\be\lab{elliptic234}
\sum_{i=1}^2\|\mcF_i\|_{1,\al;E_+}^{(-\al;\Ga_{w,z})} +\sum_{i=1}^2 \|\mcG_i\|_{1,\al;(0,M)}^{(-\al;\{M\})}\leq C (\e+\||\hat{{\bf W}}|\|^2).
\ee

%

To deal with the singularity near $z_2=0$, we define
\be\no
\zeta_1=z_1,\ \zeta_2 =z_2 \cos \tau,\ \zeta_3 =z_2 \sin\tau,\quad \text{for } z_1\in [0, N], \,\, z_2\in [0, M], \,\, \tau\in [0,2\pi].
\ee
and denote
\be\no
&&E_1=\{(\zeta_1,\zeta_2,\zeta_3): 0<\zeta_1<N,\ \zeta_2^2+\zeta_3^2\leq M^2\},\ \Ga_{w,\zeta}=[0,N]\times \{(\zeta_2,\zeta_3):\zeta_2^2+\zeta_3^2=M^2\},\\\no
&&E_2= \{(\zeta_2,\zeta_3): \zeta_2^2+\zeta_3^2\leq M^2\},\q \Gamma_\zeta'=\{(\zeta_2,\zeta_3): \zeta_2^2+\zeta_3^2=M^2\},\\\no
&& \Upsilon(\bm \zeta)= \phi(\zeta_1,\sqrt{\zeta_2^2+\zeta_3^2}),
\ee
Denote $\Upsilon^*(\bm \zeta)= \Upsilon(\bm \zeta)+\mu$ where $\bm\zeta=(\zeta_1, \zeta_2, \zeta_3)$.
Then $\Upsilon^*$ satisfies the following problem
\be\lab{elliptic33}
\left\{
\begin{aligned}
&\p_{\zeta_1} (a_1(\zeta_1) \p_{\zeta_1} \Upsilon^*)  -\f{\ka_b^2}{4} a_2(\zeta_1) (\zeta_2\p_{\zeta_2}\Upsilon^*+ \zeta_3\p_{\zeta_3} \Upsilon^*)+ a_3(\zeta_1) \Upsilon^*(0,\zeta_2,\zeta_3)\\
&\q +a_2(\zeta_1) \mfd_1(\sqrt{\zeta_2^2+\zeta_3^2})\b[\p_{\zeta_2}(\mfd_1(\sqrt{\zeta_2^2+\zeta_3^2})\p_{\zeta_2} \Upsilon^*)+ \p_{\zeta_3}(\mfd_1(\sqrt{\zeta_2^2+\zeta_3^2})\p_{\zeta_3}\Upsilon^*)\b]\\
 &=  \p_{\zeta_1} \mcF_1(\zeta_1,\sqrt{\zeta_2^2+\zeta_3^2})+ \displaystyle\sum_{i=2}^3\p_{\zeta_i}\b(\f{\zeta_i \mcF_2(\zeta_1,\sqrt{\zeta_2^2+\zeta_3^2})}{\sqrt{\zeta_2^2+\zeta_3^2}}\b)-\f{\mcF_2(\zeta_1,\sqrt{\zeta_2^2+\zeta_3^2})}{\sqrt{\zeta_2^2+\zeta_3^2}},\\
&\p_{\zeta_1} \Upsilon^*(0,\zeta_2,\zeta_3)+ a_4 \Upsilon^*(0,\zeta_2,\zeta_3)= \mcG_1(\sqrt{\zeta_2^2+\zeta_3^2}),\\
&\p_{\zeta_1} \Upsilon^*(N,\zeta_2,\zeta_3)= \mcG_2(\sqrt{\zeta_2^2+\zeta_3^2}),\\
&(\zeta_2\p_{\zeta_2}+ \zeta_3\p_{\zeta_3}) \Upsilon^*(\zeta_1,\zeta_2,\zeta_3)= M \mcG_3(\zeta_1),\ \ \text{on}\ \zeta_2^2+\zeta_3^2=M^2.
\end{aligned}
\right.
\ee

\bp\lab{elliptic35}
{\it For any $(\mcF_1,\mcF_2)\in C_{1,\al;E_1}^{(-\al;\Ga_{w,\rm\zeta})}$ and $\mcF_2(x_1,0)=0$, $\mcG_1$, $\mcG_2\in C_{1,\al;E_2}^{(-\al;\Gamma_\zeta')}$, then the problem \eqref{elliptic33} has a unique solution $\Upsilon^*(\bm \zeta)= \tilde\Upsilon^*(\zeta_1,\sqrt{\zeta_2^2+\zeta_3^2})\in C_{2,\al;E_1}^{(-1-\al;\Ga_{w, \zeta})}$, which satisfies the following estimate
\be\lab{elliptic36}
\|\Upsilon^*\|_{2,\al;E_1}^{(-1-\al;\Ga_{w,\rm\zeta})}\leq C \left(\sum_{i=1}^2\|\mcF_i\|_{1,\al;E_1}^{(-\al;\Ga_{w,\rm\zeta})}+ \sum_{j=1}^2\|\mcG_j\|_{1,\al;E_2}^{(-\al;\Gamma_\zeta')}+\|\mcG_3\|_{1,\al;[0,N]}\right).
\ee
}\ep

\bpf
Note that the coefficients in the first equation of \eqref{elliptic33} are infinitely smooth near the axis $\zeta_2^2+\zeta_3^2=0$,
which is quite different from the elliptic system in  \cite[Lemma 4.3]{lxy10b}. So we do not need to take much care of the regularity near the axis.
This advantage  comes essentially from our new Lagrangian transformation. The system \eqref{elliptic33} has a variational structure similar
to the one in the proof of  \cite[Lemma 4.3]{lxy10b}, one can obtain the existence and uniqueness of $H^1(E_1)$ weak solution by Lax-Milgram
theorem and Fredholm alternative theorem as in \cite{lxy10b}.

To get the estimate \eqref{elliptic36}, one can put the term  $a_3(\zeta_1) \Upsilon^*(0,\zeta_2,\zeta_3)$
on the right hand side, so by the trace theorem, the right hand side belongs to $L^2(E_1)$ and the interior estimates can be obtained by a standard way.
Furthermore, one can use \cite[Theorems 5.36 and 5.45]{lieberman13} to obtain global $L^{\infty}$ bound and $C^{\alpha}$ norm estimates for $\Upsilon^*$ with
some H\"{o}lder exponent $\al\in (0,1)$. Hence the nonlocal term $a_3(\zeta_1) \Upsilon^*(0,\zeta_2,\zeta_3)$ becomes $C^{\al}$ and \eqref{elliptic36}
follows by employing  \cite[Theorem 4.6]{lieberman13}.
\epf

Proposition \ref{elliptic35} actually implies the following estimates for $W_2$ and $W_4$.
\bp\lab{solvability}
{\it The probelm \eqref{elliptic24} has a unique solution $(W_2, W_4, W_6(M))\in (C_{2,\al;E_+}^{(-\al;\Ga_{w,z})})^2\times \mbR$ satisfying
\be\lab{elliptic241}
\|W_2\|_{2,\al,E_+}^{(-\al;\Ga_{w,z})}+ \|W_4\|_{2,\al,E_+}^{(-\al;\Ga_{w,z})} + |W_6(M)|\leq C(\de^2 +\e)
\ee
and
\be\lab{compatibility241}
W_2(z_1,0)=\p_{z_2}^2 W_2(z_1,0)=0,\ \p_{z_2} W_4(z_1,0)=0.
\ee
}\ep

\bpf
It follows from Proposition \ref{elliptic35} and the equivalence between $\|\cdot\|_{1,\al;E_+}^{(-\al;\Ga_{w,z})}$ and $\|\cdot\|_{1,\al;E_1}^{(-\al;\Ga_{w,\zeta})}$ that the system \eqref{elliptic24} has a unique solution $(W_2,W_4,W_6(M))\in (C_{1,\al;E_+}^{(-\al;\Ga_{z,w})})^2\times \mbR$ satisfying
\be\lab{ellpitc242}
\begin{aligned}
&\|W_2\|_{1,\al, E_+}^{(-\al;\Ga_{w,z})} +\|W_4\|_{1,\al, E_+}^{(-\al;\Ga_{w,z})}+ |W_6(M)|\\
\leq& C(\sum_{i=1}^2 \|G_i\|_{1,\al;E_+}^{(1-\al;\Ga_{w,z})}+\|G_3\|_{1,\al;E_+}^{(-\al;\Ga_{w,z})}+ \e)\\\no
\leq& C(\||\hat{{\bf W}}|\|^2+\e)\leq C(\de^2+\e).
\end{aligned}
\ee
In addition,  $W_2(z_1,0)=\p_{z_2} W_4(z_1,0)=0$.

 Rewrite the problem \eqref{elliptic24} as
\be\lab{elliptic37}\begin{cases}
\p_{z_1}(\la_1(z_1) W_2) + \p_{z_2} (\la_2(z_1) W_4)= G_5(z),\\
\p_{z_1}(\la_4(z_1) W_4) -\la_5(z_1)\f{\sin\theta_b(z_2)}{2z_2} (\p_{z_2} W_2 +\f{2\ka_bz_2 \cos\theta_b(z_2)}{\sin^2\theta_b(z_2)} W_2)= G_6(z),\\
W_4(0,z_2)= G_8(z_2),\,\, W_4(N,z_2)=\e G_4(z_2) ,\\
W_2(z_1,0)=0,\,\, W_2(z_1,M)=\e G_5(z_1),
\end{cases}\ee
where
\be\no
\begin{aligned}
&G_5(z)= G_1(z)- \la_3(z_1) W_2(0,z_2),\\
& G_6(z)=G_2(z)+\la_6(z_1)\b(W_6(M)-a \int_{z_2}^M\f{2s}{\sin\theta_b(s)} W_2(0,s) ds\b),\\\no
&G_7(z)= e_1 a\b(\f{W_6(M)}{a}- \int_{z_2}^M \f{2s}{\sin\theta_b(s)} W_2(0,s) ds\b)+G_3(z_2).
\end{aligned}
\ee
Hence $W_4$ satisfies
\be\lab{elliptic38}\begin{cases}
\p_{z_1}\b(\f{2z_2}{\sin\theta_b(z_2)}\f{\la_1(z_1)}{\la_5(z_1)}\p_{z_1}(\la_4(z_1) W_4)\b)+ \la_2(z_1)\b(\p_{z_2}^2 W_4 + \f{2\ka_b z_2\cos\theta_b(z_2)}{\sin^2\theta_b(z_2)}\p_{z_2}W_4\b)\\
\q\q=\p_{z_1}\b(\f{2z_2}{\sin\theta_b(z_2)}\f{\la_1(z_1)}{\la_5(z_1)} G_6(z)\b)+\p_{z_2} G_5(z)+\f{2\ka_b z_2\cos\theta_b (z_2)}{\sin^2\theta_b(z_2)} G_5(z),\\
W_4(0,z_2)= G_7(z_2),\quad W_4(N,z_2)=\e G_4(z_2),\quad
\p_{z_2} W_4(z_1,0)=0.
\end{cases}\ee
 Similar to the proof of Proposition \ref{elliptic35}, one has
\begin{equation}\lab{elliptic39}
\begin{aligned}
\|W_4\|_{2,\al;E_+}^{(-\al;\Ga_{w,z})}\leq& C\b(\sum_{i=5}^6 \|G_i\|_{1,\al;E_+}^{(1-\al;\Ga_{z,w})} + \|G_7\|_{1,\al;(0,M)}^{(-\al;\{M\})}+ \e\b) \\
\leq& C(\||\hat{{\bf W}}|\|^2+\e)\leq C(\de^2+\e).
\end{aligned}
\end{equation}
This, together with the first equation in \eqref{elliptic37}, gives
\[
\|(\p_{z_1}^2 W_2,\p_{z_1z_2}^2 W_2)\|_{\al;E_+}^{(2-\al;\Ga_{w,z})}\leq C (\|W_4\|_{2,\al;E_+}^{(-\al;\Ga_{w,z})} + \|W_2\|_{1,\al, E_+}^{(-\al;\Ga_{w,z})}) \leq C (\delta^2 +\epsilon).
\]
Finally, note that
\be\no
W_2(z)= \f{2}{\la_5(z_1)\sin \theta_b(z_2)} \int_0^{z_2}s (\p_{z_1}(\la_4(z_1) W_4)(z_1,s)-G_6(z_1,s)) ds.
\ee
Similar to \cite[Lemma B.3]{lxy10b}, we conclude that $W_2$ satisfies \eqref{elliptic241} and $\p_{z_2}^2 W_2(z_1,0)=0$.
\epf

\subsection{The iteration scheme for $W_1$ and the estimate for $W_1$, $W_3$, $W_5$, and $W_6$}
It follows from \eqref{bernoulli12} that  $W_1$ can be solved as follows
\be\lab{bernoulli32}
\begin{aligned}
W_1=&\f{1}{\ti{U}_b^+}\{B^-(\hat{W}_6^\diamondsuit(z_2),z_2)-B_b^- -[h(\ti{P}_b^+ + W_4, S_b^+ +W_5)-h(\ti{P}_b^+, S_b^+)]\}\\
&\q\q-\f{1}{2 \ti{U}_b^+}[\hat{W}_1^2 + (\ti{U}_b^++ \hat{W}_1)^2 \hat{W}_2^2+\hat{W}_3^2].
\end{aligned}
\ee
Now we are ready to estimate $W_1$, $W_3$, $W_5$, and $W_6$.
\begin{proposition}\label{other}
{\it With $(W_2,W_4)\in \left(H_{2,\alpha;E_+}^{(-\alpha;\Gamma_{w,z})}\right)^2$ obtained in Proposition \ref{solvability}, $W_6$, $W_5$, $W_3$, and $W_1$ are uniquely determined by \eqref{shock23}, \eqref{entropy21}, \eqref{swirl22} and \eqref{bernoulli32} and satisfy
\begin{equation}\label{other1}
\sum_{j=1,3,5}\|W_j|_{2,\al;E_+}^{(-\al;\Ga_{w,z})}+\|W_6\|_{3,\al,[0,M)}^{(-1-\al;\{M\})}
\leq C(\de^2+\e).
\end{equation}
}\end{proposition}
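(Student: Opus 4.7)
The plan is to estimate $W_6$, $W_5$, $W_3$, and $W_1$ in sequence from their explicit defining formulas \eqref{shock23}, \eqref{entropy21}, \eqref{swirl22}, and \eqref{bernoulli32}, using the bounds on $(W_2, W_4, W_6(M))$ from Proposition~\ref{solvability}, the smallness $\e$ of the supersonic perturbation from \eqref{super4}, and the membership $\hat{\bW} \in \Xi_\de$. The first step is $W_6$ from \eqref{shock23}. Because $\theta_b(z_2) = \arccos(1-\ka_b z_2^2)$, the factor $2z_2/\sin\theta_b(z_2)$ is smooth and bounded away from zero on $[0,M]$---this is precisely the payoff of the new invertible Lagrangian transformation. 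Differentiating \eqref{shock23} $k$ times expresses $W_6^{(k+1)}$ as a linear combination of $\p_{z_2}^k W_2(0, z_2)$ and derivatives of the quadratic remainder $R_{12}$. Since $W_2 \in C_{2,\al;E_+}^{(-\al;\Ga_{w,z})}$ has its weight anchored at $z_2 = M$, its trace on $\{z_1=0\}$ lies in the one-dimensional analogue of this weighted H\"older space, and integration once in $z_2$ yields the required weighted $C^{3,\al}$ bound for $W_6$ with weight $(-1-\al;\{M\})$. The compatibilities $W_6'(0) = W_6^{(3)}(0) = 0$ then follow from $W_2(z_1, 0) = \p_{z_2}^2 W_2(z_1, 0) = 0$ together with the even-in-$z_2$ structure of $R_{11}$ inherited from $\hat{\bW}\in\Xi_\de$.

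Next, $W_5 = e_2 W_6 + R_4$ from \eqref{entropy21} is affine in $W_6$ with a quadratic remainder, so the estimate for $W_5$ is immediate from that for $W_6$, with $\p_{z_2}W_5(z_1,0)=0$ following from $W_6'(0)=0$ and the axis structure of $R_4$. For $W_3$ from \eqref{swirl22}, the essential point is that both $\sin\hat{\theta}(0, z_2)$ and $\sin\hat{\theta}(z_1, z_2)$ vanish exactly to first order at $z_2 = 0$: by \eqref{theta21}, $\hat{\vartheta}(z_1, z_2)/z_2^2$ is smooth and bounded below by a positive constant, so $\sin\hat{\theta}(z_1, z_2)/z_2$ is smooth and bounded away from zero. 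Hence the ratio appearing in \eqref{swirl22} is smooth and $O(1)$ with a $C^{2,\al}$ norm of size $C(1+\de)$, and the composition $U_3^-(\hat{W}_6^\diamondsuit(z_2), z_2)$ supplies the factor $\e$. The weighted $C^{2,\al}$ estimate for $W_3$ and the axis compatibility $\p_{z_2}W_3(z_1,0)=0$ are then routine.

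Finally, $W_1$ is read off \eqref{bernoulli32}: the Bernoulli defect $B^-(\hat{W}_6^\diamondsuit, z_2) - B_b^-$ is $O(\e)$; the enthalpy difference $h(\ti{P}_b^+ + W_4, S_b^+ + W_5) - h(\ti{P}_b^+, S_b^+)$ is linear in $(W_4, W_5)$ with smooth coefficients, hence of size $C(\de^2+\e)$ by Proposition~\ref{solvability} and the estimate just obtained for $W_5$; and the terms $\hat{W}_1^2$, $(\ti{U}_b^+ + \hat{W}_1)^2 \hat{W}_2^2$, $\hat{W}_3^2$ are quadratic in $\hat{\bW}$, hence $O(\de^2)$. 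Combining gives $\|W_1\|_{2,\al;E_+}^{(-\al;\Ga_{w,z})} \leq C(\de^2 + \e)$, and $\p_{z_2}W_1(z_1,0)=0$ follows from the even-in-$z_2$ structure of each summand, ensured by \eqref{super5} and $\hat{\bW}\in\Xi_\de$. The main technical subtlety throughout is the careful bookkeeping of the trigonometric factors near the axis $z_2 = 0$, but this is exactly where the new Lagrangian transformation pays off: the identities $z_2/\sin\theta_b(z_2)$ and $z_2/\sin\hat{\theta}(z_1,z_2)$ being smooth and nonvanishing remove what would otherwise be a genuine singularity, and the remaining estimates then follow by straightforward weighted-H\"older calculus.
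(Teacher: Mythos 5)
Your proposal is correct and follows essentially the same route as the paper: estimate $W_6$, $W_5$, $W_3$, $W_1$ in that order directly from the formulas \eqref{shock23}, \eqref{entropy21}, \eqref{swirl22}, \eqref{bernoulli32}, using the smoothness and nonvanishing of $2z_2/\sin\theta_b(z_2)$ and $z_2/\sin\hat\theta(z)$ near the axis, the $O(\e)$ size of the supersonic data, and the quadratic structure of the remainders. The only cosmetic difference is that the paper records the bound on $W_3$ as $C\e\de$ rather than your $C\e$, but both are $\le C(\de^2+\e)$, so nothing is affected.
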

\begin{proof}
It follows from \eqref{shock23} that
\be\lab{shock31}
\begin{aligned}
W_6(z_2)=& W_6(M)- a\int_{z_2}^M \f{2s}{\sin\theta_b(s)} W_2(0,s) ds\\
&\q\q-\int_{z_2}^M R_{11}(\hat{{\bf W}}(0,s), \bm{\Phi}^-(r_b+ \hat{W}_6(s),s)- \bm{\Phi}_b^-(r_b+\hat{W}_6(s)))ds.
\end{aligned}
\ee
Thus $W_6'(0)=0$ and the following estimate holds
\be\lab{shock32}
\begin{aligned}
\|W_6\|_{3,\al,[0,M)}^{(-1-\al;\{M\})} \leq& C(|W_6(M)|+\|W_2\|_{2,\al,E_+}^{(-\al;\Ga_{w,z})}+\|R_{11}(\hat{W},\bm{\Phi}^- -\bm{\Phi}^-_b)\|_{2,\al,E_+}^{(-\al;\Ga_{w,z})})\\
\leq & C(\de^2+\e).
\end{aligned}
\ee
It follows from \eqref{entropy21} that
\be\lab{entropy31}
W_5(z)=W_5(0,z_2)= e_2 W_6(z_2) + R_4(\hat{{\bf W}},\bm{\Phi}^- -\bm{\Phi}^-_b).
\ee
Hence $\p_{z_2} W_5(z_1,0)=0$ and
\be\lab{entropy32}
\begin{aligned}
\|W_5\|_{2,\al,E_+}^{(-\al;\Ga_{w,z})}\leq& e_2 \|W_6\|_{3,\al,[0,M)}^{(-1-\al;\{M\})}+\|R_4\|_{2,\al,E_+}^{(-\al;\Ga_{w,z})}
\leq  C(\de^2+\e).
\end{aligned}
\ee

Using \eqref{swirl22} gives
\begin{eqnarray}\label{swirl32}
\|W_3\|_{2,\al;E_+}^{(-\al;\Ga_{w,z})}&\leq& C\|\hat{{\bf W}}\|_{\Xi_{\de}}\|U_3^-\|_{C^{2,\alpha}(\Omega)}\leq C\epsilon\delta.
\end{eqnarray}

It follows from \eqref{bernoulli32} that
\be\lab{bernoulli33}
\|W_1\|_{2,\al;E_+}^{(-\al;\Ga_{w,z})} &\leq& C\left(\e+ \sum_{j=3}^4\|W_i\|_{2,\al;E_+}^{(-\al;\Ga_{w,z})} + \||\hat{{\bf W}}|\|^2\right)\leq C(\e+\de^2).
\ee
Combining \eqref{shock32} with \eqref{entropy32}-\eqref{bernoulli33} together finishes the proof of the proposition.
\end{proof}

\subsection{Proof of Theorem \ref{transonic}} Now we are in position to prove Theorem \ref{transonic}.
\bpf[Proof of Theorem \ref{transonic}.] The proof is divided into three steps.

{\it Step 1. Boundedness.} Given any $\hat{{\bf W}}\in \Xi_{\de}$, let ${\bf W}=\mc{T}(\hat{{\bf W}})$ be the solutions obtained in Propositions \ref{solvability} and \ref{other}. Thus one has
\be\lab{final}
\||{\bf W}|\|\leq C_*(\e+\de^2).
\ee
Let $\de=2 C_*\e$ and choose $\e_0$ small enough satisfying $2C_*^2\e_0\leq \f12$. Therefore, for any $0<\e\leq \e_0$, one has
\be\no
C_*(\e+\de^2)=\f{\de}{2}+ 2C_*^2\e\de \leq \f{\de}{2}+\f{\de}{2}=\de.
\ee
This implies that $\mc{T}$ maps $\Xi_\de$ into itself.

{\it Step 2. Contraction.} Given any $\hat{{\bf W}}^{(i)}\in \Xi_\de$ ( $i=1$, $2$), let ${\bf W}^{(i)}=\mc{T} \hat{{\bf W}}^{(i)}$ ($i=1$, $2$) be obtained in Step 1. Denote
\be\no
\hat{{\bf Y}}= \hat{{\bf W}}^{(1)}- \hat{{\bf W}}^{(2)}\quad \text{and}\quad  {\bf Y}= {\bf W}^{(1)}-{\bf W}^{(2)}.
\ee

 It follows from \eqref{elliptic23} that $Y_2$ and $Y_4$ satisfies
\be\lab{elliptic41}\begin{cases}
\p_{z_1}(\la_1(z_1) Y_2) +\f{\sin\theta_b(z_2)}{2z_2} \p_{z_2} (\la_2(z_2) Y_4) + \la_3 Y_2(0,z_2)= G_1^{(1)}(z)-G_1^{(2)}(z),\\
\p_{z_1}(\la_4(z_1) Y_4) -\la_5(z_1)\f{\sin\theta_b(z_2)}{2z_2} (\p_{z_2} Y_2 +  \f{2\ka_bz_2\cos\theta_b(z_2)}{\sin^2\theta_b(z_2)}Y_2)\\
\q\q- \la_6(z_1)\b(Y_6(M)-a \int_{z_2}^M \f{2s}{\sin\theta_b(s)} Y_2(0,s) ds\b)= G_2^{(1)}(z)-G_2^{(2)}(z),\\
Y_4(0,z_2)= e_1 a\b(\f{Y_6(M)}{a}- \int_{z_2}^M \f{2s}{\sin\theta_b(s)} Y_2(0,s) ds\b)+G_3^{(1)}(z_2)-G_3^{(2)}(z_2),\\
Y_4(N,z_2)=G_4^{(1)}(z_2)-G_4^{(2)}(z_2),\\
Y_2(z_1,0)=0,\q Y_2(z_1,M)= G_5^{(1)}(z_1)-G_5^{(2)}(z_1).
\end{cases}\ee
Using Proposition \ref{solvability} gives
\begin{equation}\lab{elliptic42}
\begin{aligned}
\sum_{i=2,4}\|Y_i\|_{2,\al;E_+}^{(-\al;\Ga_{w,z})}+|Y_6(M)|\leq& C\sum_{i=1}^2 \|G_i^{(1)}-G_i^{(2)}\|_{1,\al;E_+}^{(1-\al;\Ga_{w,z})}+ \|G_3^{(1)}-G_3^{(2)}\|_{1,\al;[0,M)}^{(-\al;\{M\})}\\
&\q\q+ \e \|P_0(\hat{\theta}^{(1)})-P_0(\hat{\theta}^{(2)})\|_{1,\al;E_+}^{(-\al;\{M\})} +C\e |\hat{Y}(M)|\\
\leq& C\e \left(\sum_{i=1}^5 \|\hat{Y}_i\|_{2,\al;E_+}^{(-\al;\Ga_{w,z})}+\|\hat{Y}_6\|_{3,\al;[0,M)}^{(-1-\al;\{M\})}\right).
\end{aligned}
\end{equation}

It follows from \eqref{shock31} that $Y_6$ satisfies
\be\lab{shock41}
Y_6(z_2)= Y_6(M)- \int_{z_2}^M \f{2s}{\sin\theta_b(s)} Y_2(0,s)ds + R_{12}^{(1)}-R_{12}^{(2)}.
\ee
Therefore, one has
\be\lab{shock42}
\|Y_6\|_{3,\al; [0,M)}^{(-1-\al;\{M\})}&\leq& |Y_6(M)| +C \|Y_2\|_{2,\al; E_+}^{(-\al;\Ga_{w,z})}+ \|R_{11}^{(1)}-R_{11}^{(2)}\|_{2,\al; E_+}^{(-\al;\Ga_{w,z})}\\\no
&\leq& C\e \||\hat{{\bf Y}}|\|.
\ee

It follows from \eqref{entropy31} that
\be\lab{entropy41}
Y_5(z)= e_2 Y_6(z_2) + R_4^{(1)}- R_4^{(2)}.
\ee
Thus it holds that
\be\lab{entropy42}
\|Y_5\|_{2,\al;E_+}^{(-\al;\Ga_{w,z})} &\leq& C\|Y_6\|_{3,\al; [0,M)}^{(-1-\al;\{M\})}+ \|R_4^{(1)}-R_4^{(2)}\|_{2,\al;E_+}^{(-\al;\Ga_{w,z})}\leq C\e \||\hat{{\bf Y}}|\|.
\ee

The equation \eqref{swirl22} implies
\be\lab{swirl40_5}
Y_3(z_1,z_2)&=&\f{\hat{W}_6^\diamondsuit(z_2)}{\hat{W}_6^\#(z_1,z_2)} \f{\sin\hat{\theta}(0,z_2)}{\sin\hat{\theta}(z_1,z_2)} U_3^-(\hat{W}_6^\diamondsuit(z_2),z_2).
\ee
Thus one has
\be\lab{swirl41}
\|Y_3\|_{2,\al;E_+}^{(-\al;\Ga_{w,z})}\leq C\e \||\hat{{\bf Y}}|\|.
\ee

Finally, \eqref{bernoulli32} implies that
\be\lab{bernoulli41}
\begin{aligned}
\|Y_1\|_{2,\al;E_+}^{(-\al;\Ga_{w,z})}&\leq C(\e \|\hat{Y}_6\|_{3,\al;(0,M)}^{(-1-\al;\{M\})}+\sum_{j=3}^4\|Y_j\|_{2,\al;E_+}^{(-\al;\Ga_{w,z})}+C\e \||\hat{{\bf Y}}|\|\\
&\leq C\e\||\hat{{\bf Y}}|\|.
\end{aligned}
\ee

Collecting all the estimates \eqref{elliptic42}, \eqref{shock42}, \eqref{entropy42}, \eqref{swirl41}, and \eqref{bernoulli41} together gives
\be\lab{total}
\||{\bf Y}|\|\leq C_{\sharp} \e \||\hat{{\bf Y}}|\|.
\ee
Obviously, if one chooses $\e_0\leq \min\{\f1{4C_*^2}, \f{1}{2C_{\sharp}}\}$, then $\mc{T}$ is a contraction mapping for $\Xi_\de$ to $\Xi_\de$. Hence $\mc{T}$ must have a fixed point in $\Xi_\de$. It is easy to see that this fixed point is a solution for the problem \eqref{shock12}, \eqref{entropy12},  \eqref{swirl12}, \eqref{bernoulli12}, and \eqref{elliptic13}. Furthermore, since the Lagrangian transformation is invertible, the associated solution  $(U_1^+,U_2^+,U_3^+, P^+,S^+)$ and $\xi$ satisfy the properties listed in \eqref{transonic01} and \eqref{subsonic01}.

{\it Step 3. Uniqueness.}  Suppose that there are two solutions $(U_{1}^{+,(j)},U_{2}^{+,(j)},U_{3}^{+,(j)}, P^{+,(j)},S^{+,(j)})$ and $\xi_j$ ($j=1$, $2$) satisfying the properties \eqref{transonic01} and \eqref{subsonic01}. We can perform the corresponding Lagrangian transformation and decompose the Euler system as above, in this case we do not need to use the extension \eqref{extension} any more because  the existence of solutions has been assumed. It is the same as the proof for that the operator $\mc{T}$ is a contraction mapping. Therefore, these two solutions are indeed the same.
\epf

\section{High order regularity of the transonic shock solution}\noindent


In this section, we show that the regularity of the shock front and subsonic solutions can be improved if  the nozzle wall is not perturbed and  the supersonic incoming flow satisfies some additional compatibility conditions.

In the following lemma, we show that the compatibility conditions \eqref{super3} and \eqref{pressure1} for the supersonic solutions are preserved along the straight wall.
\bl\lab{supersonic}  {\it If \eqref{super3} and \eqref{pressure1} hold, the system \eqref{euler-sph1} supplemented with \eqref{super1} and \eqref{wallslip} has a unique
smooth solution $\bPsi^-=(U_1^-,U_2^-,U_3^-,P^-,S^-)(r,\theta)\in
C^{2,\al}(\bar{\Om}).$ Moreover, this solution $\bPsi^-$ satisfies
\begin{equation}\lab{2114}
\|(U_1^-,U_2^-,U_3^-,P^-,S^-)-({U}_{0}^-,0,0,\hat{P}_0^-, \hat{S}_0^-)\|_{C^{2,\al}(\overline{\Om})}\leq C_0\e,
\end{equation}
where the positive constant $C_0$ depends only on $\al$ and the supersonic
incoming flow.

If, in addition, $\Psi_{en}^-$ satisfies \eqref{compati2}, then the solutions $\bm\Psi^-$ satisfies
\be
 \f{\p}{\p \th}(U_1^-,U_3^-, P^-,S^-)(r, \th_0)=0.
\ee
}
\el
\bpf
Since $U_2(r,\theta_0)\equiv 0$, it follows from the third, fourth and fifth equation of \eqref{euler-sph1} that one has
\be\lab{541}
\p_{\theta} P- (\rho U_3^2) \cot \theta=0,\q  (r\p_r U_3+ U_3)=0,\q \p_r S=0 \quad \text{for }\theta=\theta_0.
\ee
Furthermore, differentiating the fifth equation of \eqref{euler-sph1} with respect to $\theta$ yields
\be\no
\rho U_1\p_r(\p_{\theta} S)(r,\theta_0) + \f{\rho}{r}\p_{\theta} U_2 \p_{\theta}S(r,\theta_0)=0.
\ee
Therefore, $\p_{\th} S(r,\theta_0)\equiv 0$ as long as $\p_{\theta}S(r_1,\theta_0)=0$.

If $U_3(r_1,\theta_0)\equiv 0$, then one can conclude  $U_3(r,\theta_0)\equiv 0$ from \eqref{541}. Using \eqref{541} again yields $\p_{\theta} P(r,\theta_0)=0$ and $\p_r U_3(r,\theta_0)\equiv 0$. Differentiating the second equation of \eqref{euler-sph1} with respect to $\theta$ gives
\be\no
\rho U_1\p_r(\p_{\th} U_1)(r,\theta_0) + \rho \p_r U_1 \p_{\theta} U_1 (r,\theta_0) + \f{\rho}{r} \p_{\theta} U_2 \p_{\theta} U_1(r,\theta_0)=0.
\ee
 Hence, $\p_{\th} U_1(r,\theta_0)\equiv 0$ provided  $\p_{\theta} U_1(r_0,\theta_0)=0$. The compatibility conditions at $\theta=0$ can be obtained similarly except for the second derivative $\p_{\theta}^2 U_2(r,0)=0$, which can be obtained by differentiating the first equation of \eqref{euler-sph1} with respect to $\theta$.
\epf

In the next lemma, we give the compatibility conditions of the subsonic flows at the intersection circles of the shock front and the nozzle wall as long as the assumptions of Lemma \ref{supersonic} hold.
\bl\lab{compatibility}
{\it  If the system \eqref{euler-sph1} with \eqref{super1},  \eqref{pressure},  \eqref{wallslip} and \eqref{compati2}, has a solution
$$(U_1^{\pm}(r,\th),U_2^{\pm}(r,\th),U_3^{\pm}(r,\theta), P^{\pm}(r,\th),S^{\pm}(r,\th))\in C^{2,\al}(\ol{\Om^{\pm}})$$
and $\xi(\th)\in C^{3,\al}([0,\theta_0])$, then the following compatibility
conditions on the nozzle wall and the symmetry axis hold
\begin{equation}\lab{231}
\left\{
\begin{aligned}
&\p_{\th}(U_1^+,U_3^+, P^+, S^+)(r,\th_0)\equiv 0,\quad \p_{\th}(U_1^+,U_3^+, P^+, S^+)(r,0)\equiv 0,\\
&U_2(r,0)^+=U_3^+(r,0)=U_2^+(r,\th_0)=U_3^+(r,\theta_0)=0,\,\, \p_{\th}^2U_2^+(r,0)=\p_{\th}^2U_2^+(r,\theta_0)=0,\\
&\xi'(0)=\xi'(\th_0)=0,\quad \xi^{(3)}(0)=0.
\end{aligned}
\right.
\end{equation}
}\el
\bpf It follows from the boundary condition \eqref{wallslip} and the jump
conditions \eqref{rh-sph} that
$$
U^+_2(r,0)=U_2^+(r,\th_0)=0, \q \xi'(0)=\xi'(\th_0)=0.
$$
Furthermore, the fourth equation in \eqref{rh-sph} implies that $U_3^+(\xi(\theta_0),\theta_0)= U_3^-(\xi(\theta_0),\theta_0)=0$. Thus it follows from  the fourth equation in \eqref{euler-sph1} that  $U_3^+(r,\theta_0)=0$ for any $r\in [\xi(\th_0),r_2]$. Therefore, $\f{\p}{\p\theta}P^+(r,\theta_0)\equiv 0$.

Differentiating the first, the second, the
fourth, and the fifth equations in \eqref{rh-sph} along the shock front gives
$$
\begin{cases}
\p_{\th}(\rho^+ U^+_1)(\xi(\theta_0)+, \theta_0)=\p_{\th}(\rho^- U_1^-)(\xi(\theta_0)-, \theta_0),\\
\p_{\th}(\rho^+ (U_1^+)^2+P^+)(\xi(\theta_0)+, \theta_0)=\p_{\th}(\rho^- (U_1^-)^2+P^-)(\xi(\theta_0)-, \theta_0),\\
\p_{\th} U_3^+(\xi(\theta_0)+, \theta_0) =\p_{\th} U_3^-(\xi(\theta_0)-, \theta_0),\\
\p_{\th}\left(e^+ +\f{|U^+|^2}{2}+\f{P^+}{\rho^+}\right)(\xi(\theta_0)+, \theta_0)=\p_{\th}\left( e^-+\f{|U^-|^2}{2}+\f{P^-}{\rho^-} \right)(\xi(\theta_0)-, \theta_0).
\end{cases}
$$
It follows from Lemma \ref{supersonic} that $\p_{\th}(U_1^-,U_3^-,P^-,S^-)(r,\th_0)=0$. These then imply that $\p_{\th} U_3^+(\xi(\th_0),\theta_0)=0$ and
$$
\begin{cases}
\p_{\th}(\rho^+ U_1^+)(\xi(\theta_0)+, \theta_0)=0,\\
(\rho U_1^+\p_{\th} U_1^+ +\p_{\th}P^+)(\xi(\theta_0)+, \theta_0)=0,\\
\p_{\th}(e^++\f{|U^+|^2}{2}+\f{P^+}{\rho^+})(\xi(\theta_0)+, \theta_0)=0,
\end{cases}
$$
which yields
\begin{eqnarray}\label{corner}
\p_{\th}U_1^+(\xi(\th_0),\th_0)=\p_{\th}S^+(\xi(\th_0),\th_0)=\p_{\th}\rho^+(\xi(\th_0),\th_0)=0.
\end{eqnarray}

Differentiating the second and the fifth equation in \eqref{rh-sph} with respect to $\theta$ yields
$$
\begin{cases}
\left\{U_1\p_{r}(\p_{\th}U_1^+)+(\p_{r}U_1^++\f{1}{r}\p_{\th}U_2^+)\p_{\th}U_1^++\f{U_1^+\p_r U_1^+\p_{S}\rho}{\rho} \p_{\th} S^+\right\}(r, \theta_0)=0,\\
\left\{U_1\p_{r}(\p_{\th}S^+)+\f{1}{r}\p_{\th}U_2^+\p_{\th}S^+ +\p_{r}S^+\p_{\th}U_1^+\right\}(r, \theta_0)=0.
\end{cases}
$$
This, together with \eqref{corner}, implies
\[
\p_{\th}U_1^+(r,\th_0)=\p_{\th}S^+(r,\th_0)=\p_{\th}\rho^+(r,\th_0)\equiv 0\quad \text{for}\,\, r\in (\xi(\theta_0), r_2].
\]
It follows from the equation for $U_3^+$ (the fourth equation in \eqref{euler-sph1}) that  one has
\be\no\begin{cases}
\left\{U_1^+\p_r(\p_{\th}U_3^+) + \f{U_1^+}{r}\p_{\th} U_3^+ + \f{\p_{\th} U_2^+}{r} \p_{\th} U_3^+\right\}(r, \theta_0)=0,\\
\p_{\th}U_3^+(\xi(\th_0),\th_0)=0.
\end{cases}\ee
Hence $\p_{\th} U_3^+(r,\th_0)\equiv 0$.

In addition, differentiating the first equation of \eqref{euler-sph1} with
respect to $\th$ leads to
$$
\p_{\th}^2U_2^+(r,0)=0.
$$
Furthermore, differentiating the third equation of
\eqref{rh-sph} along the shock front twice yields
$$
\xi^{(3)}(0)=0.
$$
Hence The proof of Lemma \ref{compatibility} is completed.
\epf

With the help of Lemmas \ref{supersonic} and \ref{compatibility}, one can prove  Theorem \ref{main2}.

\begin{proof}[Proof of Theorem \ref{main2}]
First, if the nozzle boundary is straight, then $\varpi$ and $P$ satisfy the following system
\be\lab{elliptic01}\begin{cases}
\p_{\theta}\varpi + \varpi\cot\theta -r\b(\f{1}{\rho U_1^2}-\f{1}{\rho c^2(\rho,S)}\b)\p_r P+\f{\varpi}{\rho c^2(\rho,S)}\p_{\theta} P+(\varpi^2+2)+\f{U_3^2}{U_1^2}=0,\\
\p_r \varpi -\f{\varpi}{r}-\f{\varpi^2}{r}\cot\theta+ \b(\f{1}{\rho U_1^2}-\f{\varpi^2}{\rho c^2(\rho,S)}\b)\f{1}{r}\p_{\theta} P -\f{\varpi}{\rho c^2(\rho,S)}\p_r P-\f{U_3^2}{r U_1^2}\cot\theta =0.
\end{cases}\ee
Comparing with \cite[equation (2.20)]{lxy10b}, both of the additional terms $\f{U_3^2}{U_1^2}$ and $\f{U_3^2}{r U_1^2}\cot\theta$ in \eqref{elliptic01} can be regarded as error terms and do not cause any trouble. Moreover, $U_3$ satisfies
\be\lab{swirl00}\begin{cases}
U_1\p_r (r U_3\sin\theta) + \f{U_2}{r}\p_{\theta} (r U_3\sin\theta)=0,\\
U_3(\xi(\theta),\theta)= U_3^-(\xi(\theta),\theta).
\end{cases}\ee
The transport equation \eqref{swirl00} can be uniquely solved by characteristic method. Furthermore, we can use the standard even extension (a simple modification for \cite[Lemma A]{xyy09} ) to get $C^{2,\alpha}(\Omega^+)$ regularity near the corner. The detailed proof of Theorem \ref{main2} is very similar to the proof for  \cite[Theorem 1.1]{lxy10b}, so we omit it here.
\end{proof}

{\bf Acknowledgement.}  Part of this work was done when Weng and Xie were visiting The Institute of Mathematical Sciences of The Chinese University of Hong Kong. They are grateful to the institute for providing nice research environment. Weng is partially supported by NSFC 11701431, the grant of One Thousand Youth Talents Plan of China (No. 212100004) and the Fundamental Research Funds for the Central Universities Grant 201-413000047. Xie is supported in part by NSFC grants 11971307 and 11631008, and Young Changjiang Scholars of  Ministry of Education. Xin is supported in part by the Zheng Ge Ru  Foundation, Hong Kong RGC Earmarked Research Grants CUHK-14305315, CUHK-14300917,  CUHK-14302917, and CUHK-14302819, and NSFC/RGC Joint Research Scheme Grant N-CUHK443/14.

\bibliographystyle{plain}

\begin{thebibliography}{10}


\bibitem{bf11}
M. Bae and M. Feldman, {\it Transonic shocks in multidimensional divergent nozzles.} Archive for Rational Mechanics and Analysis, 201, 2011, 777-840.

\bibitem{bw18}
M. Bae and S. Weng, {\it 3-D axisymmetric subsonic flows with nonzero swirl for the compressible Euler-Poisson system.} Ann. Inst. H. Poincare Anal Non Lineaire, 35, (2018), 161-186.

\bibitem{bers58}
L. Bers, {\it Mathematical aspects of subsonic and transonic gas dynamics,} Surveys in Applied Mathematics,
vol. 3, John Wiley Sons, Inc., New York; Chapman Hall, Ltd., London, 1958.

\bibitem{ckl00}
S. Canic, B.L. Keyfitz and G.M. Lieberman, {\it A proof of existence of perturbed steady
transonic shocks via a free boundary problem,} Comm. Pure Appl. Math. (2000), 484-511.

\bibitem{ccs06}
G. Chen, J. Chen and K. Song, {\it Transonic nozzle flows and free boundary problems for the full Euler equations,} J. Differential Equations, 229 (2006), no. 1, 92-120.

\bibitem{ccf07}
G. Chen, J. Chen and M. Feldman. {\it Transonic shocks and free boundary problems for the full Euler equations in infinite nozzles.} J. Math. Pures Appl. (9) 88 (2007), no. 2, 191-218.

\bibitem{ChenF}
G. Chen and M. Feldman, {\it Multidimensional transonic shocks and free boundary problems for nonlinear equations of mixed type}, J. Amer. Math. Soc. 16 (2003), 461-494.

\bibitem{chen05}
S. Chen, {\it Stability of transonic shock fronts in two-dimensional Euler systems,} Trans. Amer. Math. Soc. 357 (2005), 287-308.

\bibitem{chen08}
S. Chen, {\it Transonic shocks in 3-D compressible flow passing a duct with a general section for Euler systems,} Trans. Amer. Math. Soc, 360 (2008), 5265-5289.

\bibitem{cy08}
S. Chen and H.Yuan, {\it Transonic shocks in compressible flow passing a duct for three-dimensional Euler systems,} Arch. Ration. Mech. Anal., 187 (2008), 523-556.

\bibitem{cf48}
R. Courant and K. O. Friedrichs. {\it Supsonic flow and Shock Waves,} Interscience Publ., New York, (1948).

\bibitem{egm84}
P. Embid, J. Goodman, A. Majda, {\it Multiple steady states for 1-D transonic flow,}
SIAM J. Sci. Stat. Comput. 5 (1984), 21-41.

\bibitem{gh80}
D. Gilbarg and L. H\"{o}rmander, {\it Intermediate Schauder estimates,} Arch. Rational Mech. Anal. 74 (1980), 297-318.

\bibitem{gt98}
D. Gilbarg, N.S. Tudinger. {\it Elliptic Partial Differential Equations of Second Order.}
2nd edn. Grund-lehren derMathematischenWissenschaften, 224. Springer, Berlin-New
York, 1998.

\bibitem{john90}
F. John. {\it Nonlinear Wave Equations, Formation of Singularities.} University Lecture Series, 2. American Mathematical Society, Providence, 1990.

\bibitem{lxy09a}
J. Li, Z. Xin and H. Yin, {\it On transonic shocks in a nozzle with variable end pressures,} Comm. Math. Phys, 291 (2009), 111-150.


\bibitem{lxy09b}
J. Li, Z. Xin and H. Yin, {\it A free boundary value problem for the full Euler system and 2-D transonic shock in a large variable nozzle,} Math. Res. Lett., {\bf 16} (2009), 777-796.


\bibitem{lxy10a}
J. Li, Z. Xin and H. Yin, {\it The existence and monotonicity of a three-dimensional transonic shock in a finite nozzle with axisymmetric exit pressure,} Pacific J. Math., 247 (2010), 109-161.

\bibitem{lxy10b}
J. Li, Z. Xin and H. Yin, {\it On transonic shocks in a conic divergent nozzle with axi-symmetric exit pressures,} J. Differential Equations, 48 (2010), 423-469.

\bibitem{lxy13}
J. Li, Z. Xin and H. Yin, {\it Transonic shocks for the full compressible Euler system in a general two-dimensional de Laval nozzle,} Arch. Ration. Mech. Anal. 207 (2013), no. 2, 533-581.

\bibitem{ly08}
L. Liu and H. Yuan, {\it Stability of cylindrical transonic shocks for the two-dimensional steady compressible Euler system,} J. Hyperbolic Differ. Equ. 5 (2008), 347-379.

\bibitem{lieberman13}
G. M. Lieberman. {\it Oblique Derivative Problems for Elliptic Equations.} World Scientific Publishing Co. Pte. Ltd., Hackensack, NJ, 2013.


\bibitem{lxy16}
L. Liu, G. Xu, and H. Yuan, {\it Stability of Spherically symmetric subsonic flows and transonic shocks under multidimensional perturbations,} Advance in Mathematics 291 (2016), 696-757.

\bibitem{liu82}
T. Liu, {\it Transonic gas flow in a duct of varying area,} Arch. Rational Mech. Anal., 80 (1982), 1-18.

\bibitem{Weng}
S. Weng,  {\it Multi-Dimensional Conservation Laws and a Transonic Shock Problem},
M.Phil. Thesis at The Chinese University of Hong Kong, 2009


\bibitem{xyy09}
Z. Xin, W. Yan and H. Yin, {\it Transonic shock problem for the Euler system in a nozzle,} Arch. Ration. Mech. Anal., {\bf 194} (1) (2009), 1-47.

\bibitem{xy05}
Z. Xin and H. Yin, {\it Transonic shock in a nozzle I: 2D case,} Comm. Pure Appl. Math., 58 (2005), 999-1050.

\bibitem{xy08a}
Z. Xin and H. Yin, {\it 3-diemsional transonic shock in a nozzle,} Pacific J. Math. 236, (2008), 139-193.

\bibitem{xy08b}
Z. Xin and H. Yin, {\it Transonic shock in a curved nozzle, 2-D and 3-D complete Euler systems,} J. Differential Equations. 245, (2008), 1014-1085.


\end{thebibliography}

\end{document}